\newtheorem{theorem}{Theorem}[section]
\newtheorem{lemma}[theorem]{Lemma}
\theoremstyle{definition}
\theoremstyle{remark}
\newtheorem{remark}[theorem]{Remark}
\numberwithin{equation}{section}
\def\Om{\Omega}
\begin{document}

\title{Rad\'{o}- type theorem for subharmonic and plurisubharmonic functions}

\author{S\l awomir Dinew}
\address{Faculty of Mathematics and Computer Science,  Jagiellonian University 30-348 Krakow, {\L}ojasiewicza 6, Poland}

\email{slawomir.dinew@im.uj.edu.pl}
\thanks{The first Author was supported by the Polish National Science Centre grant 2017/26/E/ST1/00955.}

\author{\.Zywomir Dinew}
\address{Faculty of Mathematics and Computer Science,  Jagiellonian University 30-348 Krakow, {\L}ojasiewicza 6, Poland}
\email{zywomir.dinew@im.uj.edu.pl}

\subjclass[2020]{Primary 32U30; Secondary 31B05, 32U40, 32D20, 32U15, 35D40, 28A05, 28A78, 26B05, 26A21}

\date{}

\keywords{subharmonic function, plurisubharmonic function, removable set, Hausdorff measure, viscosity theory, Borel set}

\begin{abstract} 
We  observe that a recent result by Gardiner and Sj\"odin, solving a problem of Kr\'{a}l on subharmonic functions, can be easily generalized to yield a somewhat stronger result. This can be combined with a viscosity technique of ours, which we slightly improve, to obtain Rad\'{o}- type theorems for  plurisubharmonic functions.   Finally, we study the Borel complexity of the critical set and the set where the gradient does not exist finitely of subharmonic functions and general real valued functions. 

\end{abstract}

\maketitle

\section{Introduction}

In 1924 T. Rad\'o proved the following remarkable theorem \cite{Ra}:
\begin{theorem}
Let $\Omega\subseteq \mathbb{C}$,  be an open set and $f$ be a continuous function in $\Omega$ which is holomorphic on $\Omega\setminus\lbrace f^{-1}(0)\rbrace$. Then $f$ is holomorphic in $\Omega$.
\end{theorem}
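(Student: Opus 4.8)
The plan is to reduce the statement to a local removability question and to settle it by potential theory. Since holomorphy is a local property and $f$ is already holomorphic on the open set $\Omega\setminus Z$, where $Z:=f^{-1}(0)$ is closed, it suffices to fix $z_0\in Z$ and a disc $D$ with $\overline{D}\subset\Omega$ centred at $z_0$, and to prove that $f$ is holomorphic on $D$. If $f$ vanishes identically in a neighbourhood of $z_0$ there is nothing to prove, so I may assume that $\{f\neq 0\}\cap D$ is a nonempty open set; in particular $\log|f|\not\equiv-\infty$ on $D$.

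First I would verify that $u:=\log|f|$ is subharmonic on $D$. On the open set $\{f\neq 0\}$ the function $f$ is holomorphic and zero-free, so $u$ is harmonic there and satisfies the mean value equality; at the points of $Z$ one has $u=-\infty$, so the sub-mean value inequality holds trivially, and $u$ is upper semicontinuous because $|f|$ is continuous with values in $[0,\infty)$. Hence $u$ is subharmonic on $D$ and finite on a nonempty open subset.

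Next, since $|f|$ is continuous it is bounded by some constant $M$ on the compact set $\overline{D}$, so $v:=\log M-u$ is a nonnegative superharmonic function on $D$ which is not identically $+\infty$ and whose $+\infty$ set is precisely $Z\cap D$. The key structural step, and the point I expect to be the crux, is the potential-theoretic fact that the set on which a superharmonic function that is not identically $+\infty$ takes the value $+\infty$ is polar; consequently $Z\cap D$ is polar, and in particular it has vanishing one-dimensional Hausdorff measure and empty interior.

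Finally I would invoke Painlev\'e's removability theorem. The function $f$ is holomorphic and bounded by $M$ on $D\setminus Z$, while $Z\cap D$ has zero length, so $f$ extends to a function $g$ holomorphic on all of $D$. Since $g$ agrees with $f$ on the dense open set $D\setminus Z$ and both are continuous, $g=f$ throughout $D$, whence $f$ is holomorphic on $D$ and therefore on $\Omega$. The only genuinely nontrivial ingredients are the polarity of the infinity set of a superharmonic function and the removability of length-zero sets for bounded holomorphic functions; the remaining steps are routine.
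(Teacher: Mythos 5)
Your argument is correct, but there is nothing in the paper to compare it with: Theorem 1.1 is quoted as classical background from Rad\'o's 1924 paper \cite{Ra} and no proof is given in the text. Your route --- show $\log|f|$ is subharmonic (harmonic off $Z=f^{-1}(0)$, trivially sub-mean-valued and upper semicontinuous on $Z$), deduce that $Z$ is contained in the $-\infty$ set of a subharmonic function $\not\equiv-\infty$ and hence is polar, then remove it --- is one of the standard modern proofs, and every step you flag as the crux is a genuine theorem: the polarity of the $-\infty$ set, the fact that sets of positive $\mathcal H^{s}$-measure for some $s>0$ are non-polar in the plane (Frostman), and Painlev\'e removability. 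Two small points deserve a sentence in a written version. First, Painlev\'e's theorem is usually stated for compact null sets, while $Z\cap D$ is only relatively closed in $D$; since removability is local this is harmless (shrink the disc, or note that every point of $Z\cap D$ has a compact neighbourhood meeting $Z$ in a length-zero compactum). Second, you do not actually need the full strength of ``polar $\Rightarrow$ zero length plus Painlev\'e'': relatively closed polar sets are directly removable for holomorphic functions that are locally bounded near them, which shortens the final step; alternatively, Rad\'o's theorem has more elementary proofs (e.g.\ via the maximum principle and harmonic majorants as in Rudin, or by verifying $\bar\partial f=0$ distributionally), so your proof is neither the most elementary nor the most general, but it is complete and correct.
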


 A distinguishing feature of Rad\'{o}'s result  is that,  unlike other removable singularity results where the {\it singularity set} is required to be small in some sense, here no size bounds are a priori assumed. It is just the image of this set that has to be small.

This theorem has been generalized by many authors in various directions. In particular analogous result holds for multi-dimensional holomorphic functions. In a different direction Stout \cite{St} proved that if $f$ is continuous in $\Omega\subseteq \mathbb{C}$, holomorphic in $\Omega\setminus E$, for a relatively closed set $E$, not locally constant, and $f(E)$ is polar then $f$ extends holomorphically past $E$.

Analogous questions are meaningful not only for the class of holomorphic functions. The analogue of the Rad\'o theorem also holds for harmonic functions and even more generally for solutions to homogeneous uniformly elliptic second order PDEs, see \cite{Kr}. Even more generally Rad\'o theorem holds for certain classes of viscosity solutions to nonlinear elliptic PDEs, see \cite{T} and references therein.

In this note we focus our attention to the case of subharmonic and plurisubharmonic functions. As the examples
$$u(x_1,\cdots,x_n)=-|x_1|,\ v(z_1,\cdots,z_n)=-|Re(z_1)|$$
clearly show (this observation was made by Pokrovskiĭ in \cite{PP}) the direct analogue of Rad\'o's theorem fails for these classes- both examples are continuous, even Lipschitz, and subharmonic (resp. plurisubharmonic) except on a set that is sent by $u$ (resp. by $v$) to zero. It turns out that an additional smoothness  assumption is the key to obtain a positive result. Our main theorem, which mirrors the result in \cite{Kr} for harmonic functions, can be stated as follows:
\begin{theorem}\label{main}
Let $\Omega$ be open in $\mathbb R^{n}$ (respectively in $\mathbb C^{n}$) and $E\subseteq \Omega$ be a Borel set.	If $u\in C^{1,p}(\Omega),\, p\in(0,1]$ is  subharmonic (respectively plurisubharmonic) in some open neighborhood of $\Omega\setminus E$ and the Hausdorff measure $\mathcal H^{p}(u(E))=0$  then $u$ is actually subharmonic (respectively plurisubharmonic) in $\Omega$. If $u\in C^{1}(\Omega)$ then the same conclusion holds if $u(E)$ is at most countable. The results are optimal with respect to the size of the image of $E$.
\end{theorem}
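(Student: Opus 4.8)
The plan is to treat the real (subharmonic) statement as the core and to deduce the complex (plurisubharmonic) statement from it by a viscosity argument carried out on complex lines. A useful first reduction is available in both settings: since $u$ is assumed subharmonic (resp. plurisubharmonic) on \emph{some} open neighbourhood $U$ of $\Omega\setminus E$, the genuinely exceptional set is $F:=\Omega\setminus U$, which is relatively closed in $\Omega$ and satisfies $F\subseteq E$, so that $u(F)\subseteq u(E)$ inherits the smallness hypothesis. Replacing $E$ by $F$, I may assume the exceptional set is closed and that $u$ is subharmonic exactly on $\Omega\setminus F$. The distributional Laplacian $\Delta u$ is then a distribution of order at most one (the divergence of the continuous field $\nabla u$) which restricts to a nonnegative measure on the open set $\Omega\setminus F$; the entire problem is to show that its negative part, a priori carried by $F$, in fact vanishes.

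For the real case I would generalize the Gardiner--Sj\"odin removability result, which treats the model situation where the image reduces to a single value. When $u\in C^{1}$ and $u(E)$ is only countable, I would use the observation that for all but countably many heights $t$ the level set $\{u=t\}$ avoids $F$ entirely; combined with the coarea formula and the sign of $\Delta u$ on $\Omega\setminus F$, this pins any negative mass of $\Delta u$ onto the countable union $\bigcup_i\{u=c_i\}$ of level sets, where $u(F)=\{c_i\}$, and one then argues (via the single-value result applied after localization near each level) that this mass cannot actually be present. The interference of distinct, unseparated level sets is the genuine difficulty here, and it is exactly where $C^{1}$ regularity is used: on the regular part $\{u=c_i,\ \nabla u\neq 0\}$ the level set is a $C^{1}$ hypersurface along which the one-sided normal derivatives of $u$ are controlled, and the countability of the heights prevents accumulation of negative flux.

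For the H\"older case $u\in C^{1,p}$ with $\mathcal H^{p}(u(E))=0$, I would run a quantitative version of the same scheme. Cover $u(F)$ by intervals $\{I_j\}$ with $\sum_j|I_j|^{p}<\varepsilon$ and set $F_j:=F\cap u^{-1}(I_j)$; on each $F_j$ the oscillation of $u$ is at most $|I_j|$, and the $C^{1,p}$ modulus of $\nabla u$ should convert this into a bound of the form $C\,|I_j|^{p}$ on the negative Laplacian mass carried by $F_j$ --- the two occurrences of $p$ matching exactly, which is the reason the hypotheses are phrased with the same exponent. Summing over $j$ then bounds the total negative mass by $C\varepsilon$, and $\varepsilon\to 0$ forces subharmonicity. \emph{This covering estimate --- turning $\mathcal H^{p}$-smallness of the image into smallness of the negative Laplacian mass with the sharp exponent --- is the main obstacle}, and I expect it to be the technical heart of the generalization of Gardiner--Sj\"odin; the matched exponent strongly suggests an interpolation between the oscillation of $u$ and the $p$-H\"older modulus of $\nabla u$ on each piece $F_j$.

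With the real statement in hand, the plurisubharmonic case follows by the viscosity technique. If $u$ were not plurisubharmonic in $\Omega\subseteq\mathbb{C}^{n}$, then, $u$ being continuous, the failure is detected by a $C^{2}$ test function $\phi$ touching $u$ from above at some $x_0$ whose complex Hessian has a negative eigenvalue; picking a complex line $L_0$ through $x_0$ along that eigenvector gives $\Delta(\phi|_{L_0})(x_0)<0$. Since $u$ is plurisubharmonic off $E$, necessarily $x_0\in E$. Restricting to $L_0\cong\mathbb{C}=\mathbb{R}^{2}$, the function $u|_{L_0}$ lies in $C^{1,p}(L_0)$, is subharmonic off $E\cap L_0$, and is touched from above at $x_0$ by $\phi|_{L_0}$ with negative Laplacian, hence is \emph{not} subharmonic at $x_0$; but $u(E\cap L_0)\subseteq u(E)$ is $\mathcal H^{p}$-null (resp. countable), so the already-established two-dimensional real result forces $u|_{L_0}$ to be subharmonic --- a contradiction. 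The only improvement over the earlier technique is that a single touching test function suffices to isolate one bad complex line, which is immediate from the spectral decomposition of the complex Hessian of $\phi$. Finally, optimality is obtained by explicit constructions matched to the exponent: reducing to one real variable, I would build $g\in C^{1,p}(\mathbb{R})$ that is convex off a self-similar Cantor set $K$, is not globally convex, and has $\mathcal H^{p}(g(K))>0$ exactly at the threshold, with the negative part of $g''$ supported on $K$; then $u(x)=g(x_1)$ and $v(z)=g(\mathrm{Re}\,z_1)$ give the subharmonic and plurisubharmonic counterexamples, while taking $K$ of Hausdorff dimension zero (so $g\in C^{1}$ but $g\notin C^{1,p}$ for any $p$) with $g(K)$ uncountable of measure zero shows that in the $C^{1}$ statement countability of the image cannot be relaxed to mere nullity.
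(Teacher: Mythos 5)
There is a genuine gap, and it sits exactly where you flag it: the real (subharmonic) case, which your whole architecture rests on, is never proved. Your coarea/covering scheme --- covering $u(F)$ by intervals $I_j$ with $\sum_j|I_j|^p<\varepsilon$ and claiming the negative Laplacian mass on $F_j=F\cap u^{-1}(I_j)$ is bounded by $C|I_j|^p$ --- is a heuristic whose key estimate you explicitly defer (``the main obstacle''), and even in the countable case the ``interference of unseparated level sets'' is acknowledged but not resolved. The paper shows that no such new estimate is needed, because the right organizing idea is the decomposition of $\Omega$ into the noncritical set $\Omega_0=\{\nabla u\neq 0\}$ and its complement. On $\Omega_0$ the implicit function theorem converts the hypothesis $\mathcal H^{p}(u(E))=0$ into $\mathcal H^{n-1+p}(E\cap\Omega_0)=0$ (the preimage of an $\mathcal H^p$-null set of heights is a union of $C^1$ hypersurfaces over a null parameter set), and then one simply quotes the classical removability theorems of Sadullaev--Yarmetov (for $C^{1,p}$) and Yarmetov (for $C^1$, with countable image). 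On $\Omega\setminus\Omega_0$ one has $\nabla u=0$, so the (slightly generalized) Gardiner--Sj\"odin theorem applies with \emph{no} hypothesis on the image at all --- note that this theorem is about the critical set, not about ``the image reducing to a single value'' as you describe it; that mischaracterization is probably why your plan tries to do level-set-by-level-set analysis instead of isolating the critical set once and for all. Your proposal never makes this split, and the unproven covering estimate is precisely the analytic content that the split circumvents.

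Two further comments. First, your reduction of the plurisubharmonic case is essentially sound and, given the real case, is actually \emph{simpler} than the paper's route: since $u$ is continuous, plurisubharmonicity is equivalent to subharmonicity of the restriction to every complex line, and each restriction $u|_{L\cap\Omega}$ is $C^{1,p}$, subharmonic off the Borel set $E\cap L$ whose image is still $\mathcal H^p$-null, so the two-dimensional real theorem applies line by line; one does not even need the contradiction setup with a touching test function. The paper instead invokes its viscosity/ABP machinery (Theorem \ref{DDD}), which is designed for the harder discontinuous setting where line restrictions are useless; in the $C^{1,p}$ setting your shortcut is legitimate. Second, your optimality sketch is in the right spirit (Kr\'al's one-variable constructions, a perfect set with a gauge function for the $C^1$ case), but as written it is only a sketch; the paper's version at least records the concrete construction of the concave $\gamma$ with $\gamma'$ of the prescribed modulus of continuity.
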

  It is interesting to note that our theorem is {\it sharp} in both settings whereas in many other situations sharp results for subharmonic functions are in general not sharp for plurisubharmonic ones. A typical example is Lelong's theorem: (pluri)subharmonic functions which are locally bounded above extend through polar sets. This is optimal for subharmonic functions, whereas there are non-polar sets through which locally bounded above plurisubharmonic  functions do extend. In principle, this is because counterexamples tend to belong to the ''harmonic part'' of the theory of subharmonic functions. We wish also to point out that as a rule general results for removable sets of subharmonic functions are usually sharp. On the other hand there are quite a few removability results for plurisubharmonic functions for which optimality of assumptions remains open, see  \cite{HP}, \cite{Po}.

Note that Theorem \ref{main} settles the extension problem in the classes $C^{1,p}, 0\leq p\leq 1$. If the functions in question are $C^2$ or better the picture changes dramatically as our next result shows:
\begin{theorem}\label{c2}
Let $\Omega$ be open in $\mathbb R^{n}$ (respectively in $\mathbb C^{n}$) and $E\subseteq \Omega$ be a Borel set.	If $u\in C^{2}(\Omega)$ is  subharmonic (respectively plurisubharmonic) in some open neighborhood of $\Omega\setminus E$ then $u$ extends subharmonically (respectively plurisubharmonically) to the whole  $\Omega$ if and only if $u(E)$ has empty interior.
\end{theorem}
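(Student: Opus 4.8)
The plan is to reduce the whole biconditional to the analysis of a single open set, the \emph{obstruction set}
$N:=\{x\in\Omega:\ \Delta u(x)<0\}$ in the real case, and $N:=\{x\in\Omega:\ (u_{j\bar k}(x))_{j,k}\ \text{is not positive semidefinite}\}$ in the complex case. Since $u\in C^{2}(\Omega)$, the entries of the real (respectively complex) Hessian are continuous, so in both settings $N$ is open; and because $u$ is subharmonic (respectively plurisubharmonic) on a neighbourhood of $\Omega\setminus E$, the pointwise inequality $\Delta u\ge 0$ (respectively positivity of the Levi form) already holds there, which forces $N\subseteq E$. The observation that drives everything is the following: \emph{if $N\ne\emptyset$, then $u(N)$ has nonempty interior.} Indeed, pick a connected component $N_{0}$ of $N$; at each point of $N_{0}$ the Hessian does not vanish (a symmetric or Hermitian matrix with a negative direction is nonzero), so $u$ cannot be constant on the connected open set $N_{0}$. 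Hence $u(N_{0})$, being the continuous image of a connected set, is a nondegenerate subinterval of $\mathbb R$, which has nonempty interior and is contained in $u(N)$.

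With this in hand, the implication ``$u(E)$ has empty interior $\Rightarrow u$ extends'' is immediate. From $N\subseteq E$ we get $u(N)\subseteq u(E)$, so $u(N)$ likewise has empty interior; by the contrapositive of the observation above this forces $N=\emptyset$, i.e. $\Delta u\ge 0$ (respectively the Levi form is $\ge 0$) on all of $\Omega$. For $C^{2}$ data this is exactly (pluri)subharmonicity on $\Omega$, so $u$ is the desired extension.

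For the converse I would argue through the same set $N$. Saying that $u$ extends subharmonically to $\Omega$ means that $u$, already subharmonic on the open set $\Omega\setminus E$, is subharmonic on all of $\Omega$; for $C^{2}$ data this is precisely the statement $N=\emptyset$. Because $E$ is the singular locus of the extension problem — so that $u$ is already subharmonic on $\Omega\setminus E$ and every point of $E$ fails to possess a subharmonic neighbourhood — each point of $E$ lies in the closure $\overline N$: a singular point $x$ has, in every neighbourhood, a point at which the (real or complex) Hessian condition breaks down, for otherwise $u$ would be subharmonic near $x$ and $x$ would not be singular. Thus $E\subseteq\overline N$, and $N=\emptyset$ gives $E=\emptyset$, whence $u(E)=\emptyset$ has empty interior.

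The routine parts are the openness of $N$ and the equivalence, for $C^{2}$ functions, between the pointwise differential inequality and (pluri)subharmonicity, both of which are standard. The genuinely delicate point — and the place I would be most careful — is the image observation in the plurisubharmonic case: one must verify that a failure of positivity of the complex Hessian at a point already prevents $u$ from being locally constant, so that $u(N_{0})$ is a true interval and not a single point. This is where the $C^{2}$ hypothesis is essential, and it is exactly the feature that makes ``empty interior'' the correct threshold once two derivatives are available, in contrast with the Hausdorff-measure condition required in Theorem \ref{main}. The real and complex cases are otherwise identical, the only change being $\Delta u$ versus the Levi form.
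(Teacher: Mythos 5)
Your ``if'' direction is correct, and it is genuinely simpler than the paper's own argument. The paper splits $\Omega$ into the noncritical set $\Omega_0=\{\nabla u\neq 0\}$, where $u$ is a submersion and hence an open map (so that the dense set $\mathbb R\setminus u(E)$ pulls back to a dense subset of $\Omega_0\setminus E$, on which $\Delta u\geq 0$, respectively the Levi form is nonnegative, and continuity of the second derivatives does the rest), and the critical set, through which it extends by invoking Theorem \ref{GardinerSodin} (Gardiner--Sj\"odin) and, in the complex case, Theorem \ref{DDD}. Your obstruction set $N$ bypasses both ingredients: $N$ is open, $N\subseteq E$, and $u$ cannot be constant on any component $N_0$ of $N$ (a matrix with negative trace, or a non-positive-semidefinite Hermitian matrix, is nonzero, while constancy on the open set $N_0$ would force the Hessian to vanish there), so $u(N_0)$ is a nondegenerate interval inside $u(E)$. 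Nothing beyond continuity of the Hessian and connectedness is used; the point you flag as delicate is in fact immediate. This is a cleaner route to that implication and is worth keeping.

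The ``only if'' direction, however, has a genuine gap. You assume that ``every point of $E$ fails to possess a subharmonic neighbourhood,'' i.e.\ that $E$ is exactly the singular locus. That is not among the hypotheses: $E$ is merely a Borel set such that $u$ is (pluri)subharmonic on a neighbourhood of $\Omega\setminus E$, and under these hypotheses the literal pointwise converse is false --- take $u(x)=x_1$, subharmonic on all of $\Omega$, and $E$ a closed ball; then $u$ trivially extends while $u(E)$ is a nondegenerate interval. Your conclusion $E=\emptyset$ whenever an extension exists is the symptom of this misreading: the inclusion $E\subseteq\overline N$ simply does not follow. What the theorem's ``only if'' asserts --- and what the paper proves --- is sharpness of the empty-interior condition: if $u(E)$ is allowed to contain an interval $(a,b)$, extension can fail. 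The paper establishes this by an explicit construction: $u(z)=|z|^4-2|z|^2+1$ with $E=\left\{z\in\mathbb C:|z|\leq \frac{1}{\sqrt{2}}\right\}$, so that $u(E)=\left[\frac14,1\right]$ and $u$ is subharmonic exactly off the interior of $E$, yet admits no subharmonic extension, since $u$ is radially decreasing on $\frac{1}{\sqrt{2}}<|z|<1$ and any subharmonic extension would violate the maximum principle (compare the circles $|z|=0.8$ and $|z|=0.9$); post-composing with an increasing affine map pushes the image into the prescribed $(a,b)$. Your proposal contains no counterexample construction, so this half of the theorem remains unproven; to repair it you must replace your converse paragraph by such a construction rather than argue for the fixed pair $(u,E)$.
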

We wish to point out that $int u(E)=\emptyset$ is easily seen to be necessary as elementary examples show, but the other implication is somewhat tricky especially along the critical set $\lbrace z\in\Omega: \nabla u(z)=0\rbrace$. In fact we shall use a recent result of Gardiner and S\"odin \cite{GS} about extensions of subharmonic functions past critical sets. Another key tool in our approach comes from our previous work \cite{DD} where we proved that plurisubharmonic functions with subharmonic singularities across a Lebesgue measure zero set extend plurisubharmonically. Also throughout the paper we make use of the following simple but fundamental fact from calculus:

If a real valued function $u$ attains an extremal value at some point $x$ where the gradient of $u$ exists finitely (that is the partial derivatives of $u$ exist finitely) then $\nabla u(x)=0$, regardless of whether $u$ is differentiable at $x$ or not.

The note is organized as follows. In the next section we discuss various technical results of independent interest related to weaker notions of differentiability that we shall use later on. In Section 3 we recall and slightly generalize our findings from \cite{DD}. In Section 4 we thoroughly discuss the result of Gardiner and S\"odin from \cite{GS} providing yet another slight generalization. Section 5 is devoted to the proofs of Theorems \ref{main} and \ref{c2}.  Finally in Section 6 some applications are provided. 

\section{Critical sets and sets of finite existence of the gradient of discontinuous functions}\label{borelstuff}

In this section we study the Borel complexity of the critical set and the set of finite existence of the gradient  of discontinuous functions. Our main point of interest are (pluri)subharmonic functions, thus upper semicontinuous ones, yet we take the general perspective of real valued functions, as this seems to be a  neglected area of function theory, see \cite{D}.

Let $\Omega\subseteq \mathbb R^{n}$ be open and $u$ be a finite real valued  function on $\Omega$. We consider the set
$$E_{1}:=\{x\in\Omega: \nabla u(x) \text{ exists finitely and } \nabla u(x)=0\}.$$
This is the \textit{critical set} of $u$.

We consider also
$$E_{2}:=\{x\in\Omega: \nabla u(x) \text{ does not exists finitely}\},$$
that is the set where either some partial derivative of $u$ at $x$ equals $\pm\infty$ or the limit defining this partial does not exist.  

Subharmonic functions take the values in $[-\infty,\infty)$, so we have to take special care of the set $u^{-1}(-\infty)$. Observe that  $v:= e^{u}$ is a finite valued subharmonic  function, so the discussion below applies to it.  Let $E_1^{u}$, $E_2^{u}$ and $E_1^{v}$, $E_2^{v}$ be the corresponding sets for $u$ and $v$. It is immediate that
\begin{enumerate}
	\item $u^{-1}(-\infty)\subseteq E_2^{u}$
	\item ${\displaystyle u^{-1}(-\infty)=\bigcap_{j=1}^{\infty}\{x\in\Omega : u(x)<-j\}}$ is $G_{\delta}$
	\item $x\in E_{1}^{u}\implies x\in E_1^{v}$
	\item $x\in E_{1}^{v}$ and $u(x)>-\infty \implies x\in E_{1}^{u}$
	\item $x\in E_{2}^{u}$ and $u(x)>-\infty \implies x\in E_{2}^{v}$
	\item $x\in E_{2}^{v} \implies x\in E_{2}^{u}$
	\item it may happen that $x\in E_{2}^{u}$ and $x\not \in E_{2}^{v}$, take for example $u(z)=\log \Vert z\Vert^2$, but then $x\in E_{1}^{v}$. This is because the ratio $\frac{e^{u(x+t\vec{e}_{i})}}{t}$ takes both positive and negative values, unless  it vanishes identically. The latter is possible if $n\geq 3$ because then  the coordinate axes form a polar set. 
\end{enumerate}
Thus, $u^{-1}(-\infty)\subseteq E_{1}^{v}\cup E_{2}^{v}$ and hence $E_{1}^{u}= E_{1}^{v}\setminus  u^{-1}(-\infty)$ and $E_{2}^{u}= E_{2}^{v}\cup  u^{-1}(-\infty)$,
 so we may restrict ourselves to finite real valued functions. The Borel complexity does not change.

It is a classical result by Zahorski \cite{Za} and Brudno \cite{Bru} that for any finite real valued function of \textit{one variable} the set where the derivative is infinite is $F_{\sigma\delta}$ of measure zero, and where it does not exist either finitely or infinitely is $G_{\delta\sigma}$.  In particular, $E_2$ is Borel.

H\'ajek \cite{Ha} proved that for any  finite real valued function $u$ of \textit{one variable},  the function $$x\to\limsup_{t\to 0}\frac{u(x+t)-u(x)}{t},$$
called the \textit{upper derivative}, is a Baire- 2 function. 
Hence, the critical set $E_1$ of $u$, which is
$$\left\{x: \limsup_{t\to0} \frac{u(x+t)-u(x)}{t}\leq 0\right\}\bigcap\left\{x: \liminf_{t\to0} \frac{u(x+t)-u(x)}{t}\geq 0\right\},$$
is $F_{\sigma\delta}$, and hence Borel.

In higher dimensions these results do not generalize in the expected way. Actually, the best one can get for an arbitrary finite real valued function $u$ is that each one dimensional cross-section of  the sets where the $i$-th partial derivative vanishes and respectively does not exist finitely,  in direction parallel to the $i$-th coordinate axis is Borel. However,  the example of Serrin \cite{S} shows that both $E_1$ and $E_2$ may fail even to be Lebesgue measurable for a measurable function $u$. The example of Neubauer and Hahn \cite{N} shows that $E_1$ and $E_2$ may fail to be Borel for a function $u$ which is Baire- 3, yet $E_1$ and $E_2$ are Lebesgue measurable for any Baire function $u$. On the other hand, a recent result by Mykhaylyuk and Plichko \cite{MP} shows that for a continuous $u$ the set where the gradient exists finitely is $F_{\sigma\delta}$. In particular, $E_2$ is $G_{\delta\sigma}$. We prove the following result:

\begin{theorem}
	For any Baire- 1 (in particular semicontinuous) finite valued function $u:\Omega\to \mathbb R$ the sets $E_1$ and $E_2$, defined above, are Borel (both can be empty).
\end{theorem}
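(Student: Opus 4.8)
The plan is to reduce everything to controlling the Borel complexity of the one--sided directional upper and lower derivatives and then to read off $E_1$ and $E_2$ from them. Fix a direction $\vec e_i$ and write $g_i(x,t)=\frac{u(x+t\vec e_i)-u(x)}{t}$ for $t\neq 0$ on the open set $U_i\subseteq\Omega\times(\mathbb R\setminus\{0\})$ where $x+t\vec e_i\in\Omega$. Since $u$ is Baire--$1$ and $(x,t)\mapsto x+t\vec e_i$ is continuous, $g_i$ is a finite real Baire--$1$ function on $U_i$; consequently $\{g_i>c\}$ is an $F_{\sigma}$ subset of $U_i$ for every $c\in\mathbb R$, which is the only structural fact I will use about $u$ (and which upper and lower semicontinuous functions also enjoy, covering the parenthetical case). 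Setting
$$D_i^{+}u(x)=\limsup_{t\to0}g_i(x,t),\qquad D_i^{-}u(x)=\liminf_{t\to0}g_i(x,t),$$
the partial $\partial_i u(x)$ exists finitely precisely when $-\infty<D_i^{-}u(x)=D_i^{+}u(x)<+\infty$.

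The key step is to realise the upper derivative as an iterated lattice operation over a countable family of \emph{compact} parameter slabs. For each $n$ let $\Omega_n=\{x:\ x+t\vec e_i\in\Omega\ \text{for all}\ |t|\le 1/n\}$, an open exhaustion of $\Omega$, and for $m>n$ put $K_{n,m}=\{t:\ 1/m\le|t|\le 1/n\}$, compact and avoiding $0$. On $\Omega_n$ one has
$$D_i^{+}u=\inf_{n}\ \sup_{0<|t|\le 1/n} g_i(\cdot,t)=\inf_{n}\ \sup_{m>n}\ S_{n,m},\qquad S_{n,m}(x):=\sup_{t\in K_{n,m}}g_i(x,t).$$
Everything hinges on the complexity of $S_{n,m}$. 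Because $K_{n,m}$ is compact, the projection $\pi_\Omega\colon \Omega_n\times K_{n,m}\to\Omega_n$ is a closed map, so it sends the $F_{\sigma}$ set $\{g_i>c\}\cap(\Omega_n\times K_{n,m})$ to an $F_{\sigma}$ set, and
$$\{x:\ S_{n,m}(x)>c\}=\pi_\Omega\big(\{g_i>c\}\cap(\Omega_n\times K_{n,m})\big)$$
is therefore $F_{\sigma}$. \emph{This is exactly where the Baire--$1$ hypothesis, which only makes $\{g_i>c\}$ an $F_{\sigma}$ rather than open, is rescued by compactness in $t$.} For a merely Baire--$1$ integrand the fibre $t\mapsto g_i(x,t)$ need not be upper semicontinuous, so the device used for continuous $u$ by Mykhaylyuk--Plichko---replacing the supremum over an interval by the supremum over the rationals---fails; and an unrestricted projection along $t\in(0,1/n]$ would in general only be analytic. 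The compact annular slabs are the mechanism that keeps us inside the Borel hierarchy, and verifying this projection claim is the main obstacle.

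The remaining operations are routine bookkeeping. From $\{S_{n,m}>c\}\in F_{\sigma}$ we get $\{S_{n,m}\ge c\}=\bigcap_{q<c}\{S_{n,m}>q\}$ of class $F_{\sigma\delta}$; then $S_n:=\sup_{m>n}S_{n,m}$ has $\{S_n>c\}=\bigcup_{m>n}\{S_{n,m}>c\}$ again $F_{\sigma}$, and the outer infimum is handled through $\{D_i^{+}u>c\}=\bigcup_{q>c}\bigcap_n\{S_n\ge q\}$, which is Borel. Hence each superlevel set of $D_i^{+}u$ is Borel, $\{D_i^{+}u=+\infty\}=\bigcap_c\{D_i^{+}u>c\}$ is Borel, and $D_i^{+}u$---and symmetrically $D_i^{-}u$---is Borel measurable as a map into $[-\infty,+\infty]$.

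Finally I would assemble the sets. The locus where $\partial_i u$ exists finitely is $\{D_i^{-}u=D_i^{+}u\}\cap\{|D_i^{+}u|<\infty\}$, Borel as the intersection of the (Borel) coincidence set of two Borel extended--real functions with a Borel level set; its complement, unioned over $i=1,\dots,n$, is $E_2$, so $E_2$ is Borel. On the set where the gradient exists finitely each $\partial_i u=D_i^{+}u$, whence
$$E_1=\bigcap_{i=1}^{n}\Big(\{D_i^{-}u=D_i^{+}u\}\cap\{D_i^{+}u=0\}\Big)$$
is Borel as well, completing the argument.
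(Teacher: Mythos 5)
Your argument is correct, but it is organized quite differently from the proof in the paper, so a comparison is worthwhile. Both proofs ultimately rest on the same elementary fact: in $\mathbb R^{n+1}$ every $F_{\sigma}$ set is $\sigma$-compact, so its image under a continuous map into $\mathbb R^{n}$ is again $F_{\sigma}$; this is exactly the content of Lemma \ref{fsigma} in the paper and of your projection step for $\{S_{n,m}>c\}$. Where you diverge is in the bookkeeping. The paper follows Mykhaylyuk--Plichko for $E_2$ (via the Cauchy criterion encoded in the four-parameter sets $B_{m,k}$) and Neubauer for $E_1$, and in both cases must decouple the several free parameters $w,v,w',v'$ (respectively $h$ and the base point) by inserting rational thresholds $r,\rho$ before the one-parameter projection lemma can be applied. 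You avoid all of this by observing that the difference quotient $g_i(x,t)$ is \emph{jointly} Baire-1 on $U_i$, so its superlevel sets are already $F_{\sigma}$ in the $(x,t)$-space and a single projection in $t$ yields the superlevel sets of $\sup_{t\in K_{n,m}}g_i(\cdot,t)$; both $E_1$ and $E_2$ are then read off at once from the Borel functions $D_i^{\pm}u$. This is a genuine streamlining, at the modest cost that you only conclude ``Borel,'' whereas the paper's explicit decompositions yield the sharper classes $G_{\delta\sigma\delta}$ for $E_1$ and $F_{\sigma\delta\sigma}$ for $E_2$ (and Baire-3 for the upper and lower partial derivatives), information the paper uses in its closing remark on possibly optimal classes. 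One side remark of yours is inaccurate, though harmless: the claim that an unrestricted projection along $t\in(0,1/n]$ ``would in general only be analytic'' is false in this setting, precisely because $\{g_i>c\}$ is $F_{\sigma}$ and hence $\sigma$-compact, so the full projection is still $F_{\sigma}$; the compact annular slabs are a convenient but not essential device, and tracking the classes through your argument would in fact recover complexity bounds comparable to the paper's.
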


We begin with a lemma that, in a sense, saves the proof from the continuous case.
\begin{lemma}\label{fsigma}
	Let $I\subseteq \mathbb R$ be an interval, $B\subseteq \mathbb R^{n}$ be a $F_{\sigma}$ set. Then any set of the form $$\{(x,x_2,\ldots,x_n)\in\mathbb R\times \mathbb R^{n-1}:  x_1 - x\in I, (x_1,\ldots,x_n)\in B\}$$ is $F_{\sigma}$.  \end{lemma}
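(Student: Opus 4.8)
The plan is to first read the displayed set correctly: the variable $x_1$ is \emph{not} a coordinate of the ambient point $(x,x_2,\dots,x_n)$, but is existentially quantified. Writing $y=(x_2,\dots,x_n)$ and denoting by $S$ the set in question, we have
\[
S=\{(x,y)\in\mathbb R\times\mathbb R^{n-1}:\ \exists\,x_1\in\mathbb R,\ x_1-x\in I\ \text{and}\ (x_1,y)\in B\}.
\]
In other words, $S$ is the projection, forgetting the coordinate $x_1$, of the set $\widetilde S=\{(x,y,x_1):\ x_1-x\in I,\ (x_1,y)\in B\}$. The whole difficulty is that projections of $F_\sigma$ sets (even of closed sets) need not be $F_\sigma$, so a naive argument fails; the point will be to exploit the interval $I$ to confine the eliminated variable $x_1$ to a bounded range.

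First I would decompose both data into manageable pieces. Every interval is a countable union of compact intervals, so write $I=\bigcup_{j}J_j$ with each $J_j$ compact; and since $B$ is $F_\sigma$, write $B=\bigcup_k F_k$ with each $F_k$ closed. Because the existential quantifier distributes over the two unions, a direct check gives
\[
S=\bigcup_{j,k}S_{j,k},\qquad S_{j,k}:=\{(x,y):\ \exists\,x_1,\ x_1-x\in J_j,\ (x_1,y)\in F_k\}.
\]
As a countable union of $F_\sigma$ sets is again $F_\sigma$, it suffices to prove that each $S_{j,k}$ is $F_\sigma$; in fact I claim each $S_{j,k}$ is \emph{closed}.

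The closedness of $S_{j,k}$ is where the compactness of $J_j$ is used, and this is the heart of the matter. Let $(x_m,y_m)\in S_{j,k}$ converge to $(x_0,y_0)$, and choose witnesses $x_1^m$ with $x_1^m-x_m\in J_j$ and $(x_1^m,y_m)\in F_k$. Since $J_j$ is bounded and $x_m$ converges, the sequence $x_1^m$ is bounded, so after passing to a subsequence $x_1^m\to x_1^0$. Then $x_1^0-x_0\in J_j$ because $J_j$ is closed, and $(x_1^0,y_0)\in F_k$ because $F_k$ is closed, so $x_1^0$ witnesses $(x_0,y_0)\in S_{j,k}$. Hence $S_{j,k}$ is closed, and therefore $S=\bigcup_{j,k}S_{j,k}$ is $F_\sigma$.

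The main obstacle—and the only place where something could go wrong—is precisely the passage from $\widetilde S$ to its projection $S$. Had $I$ been kept as a single unbounded interval without the compact exhaustion, the eliminated variable $x_1$ could escape to infinity and the projection of the corresponding closed set would only be $F_\sigma$ rather than closed, forcing a more delicate argument (e.g. exhausting $\widetilde S$ by compact slabs in the $x_1$ direction and invoking that projection along a compact factor is a closed map). The decomposition $I=\bigcup_j J_j$ into compact intervals is exactly what confines $x_1$ and makes each piece closed outright. The degenerate cases (a one-point interval, or $I=\emptyset$, which yields $S=\emptyset$) are subsumed by the same scheme, and no property of $B$ beyond being $F_\sigma$ is used.
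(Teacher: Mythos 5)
Your proof is correct and is essentially the paper's argument: the paper realizes the set as the image of the $F_\sigma$ set $B\times I\subseteq\mathbb R^{n+1}$ under the continuous map $(x_1,\dots,x_{n+1})\mapsto(x_1-x_{n+1},x_2,\dots,x_n)$ and notes that such images are $F_\sigma$ because every closed subset of $\mathbb R^{n+1}$ is a countable union of compacta, while your version compactifies only the $I$-factor and verifies closedness of each piece $S_{j,k}$ by a direct sequential argument --- the same $\sigma$-compactness mechanism. One correction to your framing: for maps $\mathbb R^{n+1}\to\mathbb R^{n}$ the ``naive'' argument does not fail, since continuous images of closed (hence $\sigma$-compact) sets are automatically $F_\sigma$, and this is precisely the general fact the paper invokes; the failure you worry about only occurs in non-$\sigma$-compact spaces, as the paper's remark following the lemma points out.
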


\begin{proof} As intervals are $F_{\sigma}$ sets, we observe that $B\times I\subseteq \mathbb R^{n+1}$ is $F_{\sigma}$. Now the mapping $f:\mathbb R^{n+1}\to\mathbb R^{n}$, where
	$$f(x_1,\ldots, x_{n+1})=(x_1-x_{n+1},x_2,\ldots,x_{n})$$
	is continuous, as it is a projection, and maps $B\times I$ to the set from the statement of the lemma. Continuous mappings  $\mathbb R^{n+1}\to \mathbb R^{n}$ preserve $F_\sigma$ sets. This can be seen as follows. Every $F_\sigma$ set is at most countable union of closed sets, and each closed set in $\mathbb R^{n+1}$ can be represented as the at most countable union of compact sets. As continuous mappings transform compact set to compact ones, we conclude by observing that $f(\bigcup_{s} A_s)=\bigcup_{s} f(A_s)$ for any family of sets $A_s$.
\end{proof}
\begin{remark}
	We remark that the lemma is specific for $\mathbb R^{n}$, as in general topological spaces it is not true that continuous mappings preserve $F_{\sigma}$ sets.
\end{remark}

For the part concerning $E_2$ we follow closely \cite{MP}. Let $A_{m,k}$ be the set of all $(x,y)\in \mathbb R\times\mathbb R^{n-1}$ such that for all $w,v\in \left(x-\frac{1}{k},x+\frac{1}{k}\right)$ one has 
$$|u(w,y)-u(v,y)|\leq \frac{1}{m}.$$
Let $B_{m,k}$ be the set of all $(x,y)\in \mathbb R\times\mathbb R^{n-1}$ such that for all $w,w'\in \left(x,x+\frac{1}{k}\right)$ and all $v,v'\in \left(x-\frac{1}{k},x\right)$  one has 
$$\left|\frac{u(w,y)-u(v,y)}{w-v}-\frac{u(w',y)-u(v',y)}{w'-v'}\right|\leq \frac{1}{m}.$$
Thus, $$A_{m,k}=\bigcap_{w,v \in \left(x-\frac{1}{k},x+\frac{1}{k}\right)}\left\{(x,y): |u(w,y)-u(v,y)|\leq \frac{1}{m}\right\},$$
$$B_{m,k}=$$
$$\bigcap_{v,v'\in \left(x-\frac{1}{k},x\right),\, w,w'\in \left(x,x+\frac{1}{k}\right)}\left\{(x,y): \left|\frac{u(w,y)-u(v,y)}{w-v}-\frac{u(w',y)-u(v',y)}{w'-v'}\right|\leq \frac{1}{m} \right\}.$$
As the restriction of $u$ to the last $(n-1)$ coordinates with the first coordinate fixed is a Baire- 1 function, so is the function $\mathbb R^{n-1}\ni y\to |u(w,y)-u(v,y)|\in \mathbb R$, hence the set $\{|u(w,y)-u(v,y)|\leq \frac{1}{m}\}$, for fixed $w$ and $v$, is $G_\delta$. The same holds with the set from the definition of $B_{m,k}$ with $v,v',w,w'$ fixed. The difference with the continuous case is that these sets need not be closed. Since in the definitions of $A_{m,k}$ and $B_{m,k}$ we take uncountable intersections it is not clear that these sets are Borel.

\begin{lemma} The sets $A_{m,k}$ and $B_{m,k}$ are $G_\delta$.	
\end{lemma}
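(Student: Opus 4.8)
The plan is to pass to complements and realize $(\mathbb R\times\mathbb R^{n-1})\setminus A_{m,k}$ and the complement of $B_{m,k}$ as \emph{projections of $F_\sigma$ sets}, so that the very mechanism behind Lemma \ref{fsigma}---continuous, and in particular projection, images preserve $F_\sigma$ sets in Euclidean space by exhaustion with compacta---shows these complements are $F_\sigma$, whence $A_{m,k}$ and $B_{m,k}$ are $G_\delta$. The obstruction to a naive argument is exactly the uncountable intersection: because $u$ is merely Baire-$1$ and not continuous, one cannot replace the quantifier over all real $w,v$ in the interval by rational ones, so the rationalization available in the continuous case is lost (the rationalized set is in general strictly larger than $A_{m,k}$). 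The remedy will be to treat $w,v$ as genuine extra coordinates and project them out.

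First I would record the joint regularity. Since $u$ is Baire-$1$ on $\Omega\subseteq\mathbb R^{n}$, the function $(w,v,y)\mapsto u(w,y)-u(v,y)$ is Baire-$1$ on $\mathbb R\times\mathbb R\times\mathbb R^{n-1}$, being a difference of composites of $u$ with continuous coordinate projections; taking absolute values preserves this, so $G(w,v,y):=|u(w,y)-u(v,y)|$ is jointly Baire-$1$. By the Lebesgue--Hausdorff characterization, preimages of open sets under a Baire-$1$ function are $F_\sigma$, so the superlevel set $U:=\{(w,v,y):G(w,v,y)>\tfrac1m\}$ is $F_\sigma$ in $\mathbb R^{n+1}$.

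Then I would build the auxiliary set
$$W:=\{(x,w,v,y)\in\mathbb R^{n+2}: |w-x|<\tfrac1k,\ |v-x|<\tfrac1k,\ (w,v,y)\in U\}.$$
It is the intersection of the open slab $\{|w-x|<1/k\}\cap\{|v-x|<1/k\}$ with the preimage of $U$ under the continuous projection $(x,w,v,y)\mapsto(w,v,y)$; both are $F_\sigma$ (open sets are $F_\sigma$ in Euclidean space, and continuous preimages of $F_\sigma$ sets are $F_\sigma$), hence $W$ is $F_\sigma$. By construction the coordinate projection $\pi(x,w,v,y)=(x,y)$ maps $W$ onto $(\mathbb R\times\mathbb R^{n-1})\setminus A_{m,k}$, since $(x,y)\notin A_{m,k}$ means precisely that some $w,v\in(x-\tfrac1k,x+\tfrac1k)$ satisfy $|u(w,y)-u(v,y)|>\tfrac1m$. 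As $W$ is $\sigma$-compact and $\pi$ is continuous, $\pi(W)$ is $\sigma$-compact, hence $F_\sigma$---this is exactly the mechanism of Lemma \ref{fsigma}. Thus the complement of $A_{m,k}$ is $F_\sigma$ and $A_{m,k}$ is $G_\delta$.

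For $B_{m,k}$ the argument is identical after the following preparation. On the region $\{w>v,\ w'>v'\}$ the divided difference $Q(w,v,y):=\frac{u(w,y)-u(v,y)}{w-v}$ is Baire-$1$ (the denominator is continuous and strictly positive there), so $\Phi(w,v,w',v',y):=|Q(w,v,y)-Q(w',v',y)|$ is jointly Baire-$1$ and $U':=\{\Phi>\tfrac1m\}$ is $F_\sigma$. Introducing the new variable $x$ subject to the four open constraints $w-x,\ w'-x\in(0,\tfrac1k)$ and $v-x,\ v'-x\in(-\tfrac1k,0)$ yields an $F_\sigma$ set in $\mathbb R^{n+4}$ whose projection onto the $(x,y)$ coordinates is the complement of $B_{m,k}$; projecting out $w,v,w',v'$ keeps it $F_\sigma$, so $B_{m,k}$ is $G_\delta$. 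The one point that really must be got right---and the heart of the proof---is this passage through complements and projections, which converts the problematic uncountable intersection into a continuous image of an $F_\sigma$ set; the rest is bookkeeping with the stability of the Baire-$1$ class and of $F_\sigma$ sets under the operations used.
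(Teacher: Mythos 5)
Your proof is correct, and it takes a genuinely different route from the paper's. The paper also passes to complements, but it then \emph{decouples} the two parameters $w$ and $v$ by interpolating a rational threshold $r$ between the values: the condition $u(w,y)-u(v,y)>\tfrac1m$ is rewritten as the existence of $r\in\mathbb Q$ with $u(w,y)>r$ and $r-u(v,y)>\tfrac1m$, which turns the double union over $(w,v)$ into a countable union over $r$ of intersections of two \emph{single}-parameter unions, each of which is then exactly of the form covered by Lemma \ref{fsigma} with $B$ a superlevel set of the Baire-1 function $u$ (and similarly, with one more layer of rationalization, for $B_{m,k}$). You instead avoid any decoupling by adjoining $w,v$ (resp.\ $w,v,w',v'$) as genuine coordinates and projecting out several variables at once; the price is that you must verify \emph{joint} Baire-1 regularity of $(w,v,y)\mapsto|u(w,y)-u(v,y)|$ and of the divided-difference expression on $\{w>v\}$, which you do correctly (stability of Baire class 1 under precomposition with continuous maps, differences, products with continuous functions, and the Lebesgue--Hausdorff fact that Baire-1 preimages of open sets are $F_\sigma$). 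Both arguments ultimately rest on the same mechanism --- continuous images of $F_\sigma$ (equivalently $\sigma$-compact) subsets of Euclidean space are $F_\sigma$ --- which is the content of the paper's Lemma \ref{fsigma}; your version is a multi-parameter generalization of it. Your opening observation is also apt: the rationalization that fails for discontinuous $u$ is the replacement of the quantifier over $w,v$ by rational $w,v$, and the paper circumvents this by rationalizing the \emph{values} rather than the points, whereas you circumvent it by projection. Your argument is arguably more streamlined and scales more easily to $B_{m,k}$, where the paper needs two nested rational decouplings.
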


\begin{proof} We note that the complement of $A_{m,k}$ satisfies
	$$A_{m,k}^{c}=\bigcup_{w,v \in \left(x-\frac{1}{k},x+\frac{1}{k}\right)}\left\{(x,y): u(w,y)-u(v,y)> \frac{1}{m}\right\}$$$$\bigcup \bigcup_{w,v \in \left(x-\frac{1}{k},x+\frac{1}{k}\right)}\left\{(x,y): u(w,y)-u(v,y)<- \frac{1}{m}\right\}. $$
	It is enough to  prove that $\bigcup_{w,v \in \left(x-\frac{1}{k},x+\frac{1}{k}\right)}\left\{(x,y): u(w,y)-u(v,y)> \frac{1}{m}\right\}$ is $F_\sigma$, as the proof for $\bigcup_{w,v \in \left(x-\frac{1}{k},x+\frac{1}{k}\right)}\left\{(x,y): u(w,y)-u(v,y)<- \frac{1}{m}\right\}$ is the same.
	Observe that
	 $$\bigcup_{w,v \in \left(x-\frac{1}{k},x+\frac{1}{k}\right)}\left\{(x,y): u(w,y)-u(v,y)> \frac{1}{m}\right\}$$
$$=\bigcup_{r\in \mathbb Q}$$
$$\left[\bigcup_{w \in \left(x-\frac{1}{k},x+\frac{1}{k}\right)}\left\{(x,y): u(w,y)> r\right\}\bigcap \bigcup_{v \in \left(x-\frac{1}{k},x+\frac{1}{k}\right)}\left\{(x,y): r-u(v,y)> \frac{1}{m}\right\}\right].$$
	 Now the problem is reduced to proving that $\bigcup_{w \in \left(x-\frac{1}{k},x+\frac{1}{k}\right)}\left\{(x,y): u(w,y)> r\right\}$ is $F_\sigma$. This follows from Lemma \ref{fsigma} with $B=\{(x,y): u(x,y)> r\}$ which is $F_{\sigma}$, as $u$ is Baire- 1, and $I=\left(-\frac{1}{k},\frac{1}{k}\right)$.
	 
	 The proof for $B_{m,k}$ is similar but longer. Again,
	 $$B_{m,k}^{c}=D\bigcup F,$$
where $D$ is equal to
$$\bigcup_{v,v'\in \left(x-\frac{1}{k},x\right),\, w,w'\in \left(x,x+\frac{1}{k}\right)}\left\{(x,y): \frac{u(w,y)-u(v,y)}{w-v}-\frac{u(w',y)-u(v',y)}{w'-v'}> \frac{1}{m} \right\}$$
and $F$ is equal to 
$$\bigcup_{v,v'\in \left(x-\frac{1}{k},x\right),\, w,w'\in \left(x,x+\frac{1}{k}\right)}\left\{(x,y): \frac{u(w,y)-u(v,y)}{w-v}-\frac{u(w',y)-u(v',y)}{w'-v'}<- \frac{1}{m} \right\}.$$
The set $D$ can be written as 
	 $$\bigcup_{v,v'\in \left(x-\frac{1}{k},x\right),\, w,w'\in \left(x,x+\frac{1}{k}\right)}\left\{(x,y): \frac{u(w,y)-u(v,y)}{w-v}-\frac{u(w',y)-u(v',y)}{w'-v'}> \frac{1}{m} \right\}$$$$=\bigcup_{r\in \mathbb Q} \left[ \bigcup_{v\in \left(x-\frac{1}{k},x\right),\, w\in \left(x,x+\frac{1}{k}\right)}\left\{(x,y): \frac{u(w,y)-u(v,y)}{w-v}> r \right\}\right.$$
	 $$\left.\bigcap \bigcup_{v'\in \left(x-\frac{1}{k},x\right),\, w'\in \left(x,x+\frac{1}{k}\right)}\left\{(x,y): r-\frac{u(w',y)-u(v',y)}{w'-v'}> \frac{1}{m} \right\}\right].$$
	 Further, 
	 $$\bigcup_{v\in \left(x-\frac{1}{k},x\right),\, w\in \left(x,x+\frac{1}{k}\right)}\left\{(x,y): \frac{u(w,y)-u(v,y)}{w-v}> r \right\}$$$$=\bigcup_{v\in \left(x-\frac{1}{k},x\right),\, w\in \left(x,x+\frac{1}{k}\right)}\left\{(x,y): u(w,y)-rw>u(v,y)-rv \right\}$$
	 $$=$$
$$\bigcup_{\rho\in\mathbb  Q}\left[ \bigcup_{ w\in \left(x,x+\frac{1}{k}\right)}\left\{(x,y): u(w,y)-rw>\rho \right\}\bigcap \bigcup_{v\in \left(x-\frac{1}{k},x\right)}\left\{(x,y): \rho>u(v,y)-rv \right\}\right].$$
	 Finally, $\bigcup_{ w\in \left(x,x+\frac{1}{k}\right)}\left\{(x,y): u(w,y)-rw>\rho \right\}$ is $F_{\sigma}$ by Lemma \ref{fsigma}, where $B= \left\{(x,y): u(x,y)-rx>\rho \right\}$ is $F_{\sigma}$ as $u(x,y)-rx$ is Baire- 1, and $I= \left(0,\frac{1}{k}\right)$.
	 
	\end{proof}  
\begin{remark}
	As above, this lemma is specific for $\mathbb R^{n}$.
\end{remark}
 
Now $A_{m,k}$ is $G_\delta$, hence $A=\bigcap_{m=1}^{\infty}\bigcup_{k=1}^{\infty} A_{m,k}$ is $G_{\delta\sigma\delta}$. Likewise $B_{m,k}$ is $G_\delta$ hence $B=\bigcap_{m=1}^{\infty}\bigcup_{k=1}^{\infty} B_{m,k}$ is $G_{\delta\sigma\delta}$. The set of points where $\frac{\partial u}{\partial x_1}$ exists and is finite is exactly $A\cap B$. The same is true for all the other partial derivatives, hence the set, where $u$ allows a gradient is $G_{\delta\sigma\delta}$. Hence, $E_2$ is $F_{\sigma\delta\sigma}$.

 For the part concerning $E_1$ we follow closely \cite{N}.
 We denote by
 $$\underline{\frac{\partial u}{\partial x_1}}(x,y):=\liminf_{t\to 0}\frac{u(x+t,y)-u(x,y)}{t},\quad \overline{\frac{\partial u}{\partial x_1}}(x,y):=\limsup_{t\to 0}\frac{u(x+t,y)-u(x,y)}{t} $$
 the \textit{lower} and respectively \textit{upper partial derivatives} of $u$ with respect to $x_{1}$ at $(x,y)\in \mathbb R\times\mathbb R^{n-1}$. These functions may take infinite values. What follows is in principle the same as in \cite{N}, except for the notation, the fact that in \cite{N} one-sided lower partial derivatives are considered and the lemma below.
 
 We define 
 $$A_p:=\left\{(x,y)\in \Omega: \underline{\frac{\partial u}{\partial x_1}}(x,y)<p\right\}.$$
 Without loss of generality we may assume that $p>1$, as $A_p$ for the function $u$ is the same as $A_{q+p}$ for the function $u(x,y)+qx$ which is of the same class as $u$.
 Further, we define
 $$U^{+}_{r,k}:=\left\{(x,y): u(x+h,y)<r \text{ for some } h\in \Big[ \frac{1}{k+1}, \frac{1}{k}\Big)\right\}$$$$=\bigcup_{h\in \big[ \frac{1}{k+1}, \frac{1}{k}\big) }\{(x,y): u(x+h,y)<r\}$$
 $$=\left\{(z,y)\in\mathbb R\times \mathbb R^{n-1}:  z - x\in  \Big[ \frac{1}{k+1}, \frac{1}{k}\Big), (x,y)\in \{u(x,y)<r\}\right\},$$
 $$U^{-}_{r,k}:=\left\{(x,y): u(x+h,y)>r \text{ for some } h\in \Big(-\frac{1}{k},-\frac{1}{k+1}\Big] \right\}$$$$=\bigcup_{h\in \big(-\frac{1}{k},-\frac{1}{k+1}\big] }\{(x,y): u(x+h,y)>r\}$$ 
 $$=\left\{(z,y)\in\mathbb R\times \mathbb R^{n-1}:  z - x\in \Big(-\frac{1}{k},-\frac{1}{k+1}\Big], (x,y)\in \{u(x,y)>r\}\right\},$$
 $$V^{+}_{r,k,p}:=\left\{(x,y): u(x,y)>r-\frac{p}{k}\right\},$$
 $$V^{-}_{r,k,p}:=\left\{(x,y): u(x,y)<r+\frac{p}{k}\right\}.$$
 As $u$ is Baire- 1, $V^{+}_{r,k,p}$, as well as $V^{-}_{r,k,p}$,  are $F_{\sigma}$.
 We further define
 $$W_{k,p}:=\left(\bigcup_{r\in \mathbb Q} U^{+}_{r,k}\cap V^{+}_{r,k,p}\right)\bigcup \left(\bigcup_{r\in \mathbb Q} U^{-}_{r,k}\cap V^{-}_{r,k,p}\right)$$
 $$=\left\{(x,y): u(x+h,y)<u(x,y)+\frac{p}{k} \text{ for some } h\in  \Big[ \frac{1}{k+1}, \frac{1}{k}\Big)\right\}$$
 $$\bigcup\left\{(x,y): u(x+h,y)>u(x,y)-\frac{p}{k} \text{ for some } h\in \Big(-\frac{1}{k},-\frac{1}{k+1}\Big] \right\}.$$
 Let $$\varphi(h):=\begin{cases}h\left(\left \lceil\frac{1}{h}\right \rceil-1\right), & \text { if } h>0\\ h\left(\left \lfloor\frac{1}{h}\right \rfloor+1\right), & \text { if } h<0\end{cases}.$$ Clearly, $\lim_{h\to 0}\varphi(h)=1$.
 
  Further, we set
 $$Z_{k,p}:= \left\{(x,y): \frac{u(x+h,y)-u(x,y)}{h}<\frac{p}{\varphi(h)} \text{ for some } -\frac{1}{k}<h<\frac{1}{k}, h\neq 0\right\}$$ 
 $$=\bigcup_{h\in\left(-\frac{1}{k},\frac{1}{k}\right)\setminus\{0\}}\left\{(x,y): \frac{u(x+h,y)-u(x,y)}{h}<\frac{p}{\varphi(h)} \right\}.$$
 Now $Z_{k,p}=\bigcup_{j=k}^{\infty} W_{j,p}$. Let $B_p:=\bigcap_{k=1}^{\infty} Z_{k,p}$. Finally,
 $$A_p$$
is equal to
$$\bigcup_{m=1}^{\infty}B_{p-\frac{1}{m}}=\bigcup_{m=1}^{\infty}\bigcap_{k=1}^{\infty}\bigcup_{j=k}^{\infty}\left[\left(\bigcup_{r\in \mathbb Q} U^{+}_{r,j}\cap V^{+}_{r,j,p-\frac{1}{m}}\right)\bigcup \left(\bigcup_{r\in \mathbb Q} U^{-}_{r,j}\cap V^{-}_{r,j,p-\frac{1}{m}}\right)\right].$$
 
Now $U^{+}_{r,j}$ is exactly the type of set as in Lemma \ref{fsigma}, as $B=\{(x,y): u(x,y)<r\}$ is $F_\sigma$, and we can take $I=\Big[\frac{1}{k+1},\frac{1}{k}\Big)$. Hence, $U^{+}_{r,j}$ is $F_\sigma$. So are the sets $U^{-}_{r,j}$, $V^{+}_{r,j, p-\frac{1}{m}}$, $V^{-}_{r,j, p-\frac{1}{m}}$, and $\bigcup_{j=k}^{\infty}\left[\left(\bigcup_{r\in \mathbb Q} U^{+}_{r,j}\cap V^{+}_{r,j,p-\frac{1}{m}}\right)\bigcup \left(\bigcup_{r\in \mathbb Q} U^{-}_{r,j}\cap V^{-}_{r,j,p-\frac{1}{m}}\right)\right]$. Thus, $A_p$ is $F_{\sigma\delta\sigma}$, hence its complement $\left\{(x,y): \underline{\frac{\partial u}{\partial x_1}}(x,y)\geq p\right\}$ is $G_{\delta\sigma\delta}$. Similar reasoning applies to the upper partial derivative with respect to $x_1$.

Now the set where $\frac{\partial u}{\partial x_1}(x,y)= 0$ is $$\left\{(x,y): \underline{\frac{\partial u}{\partial x_1}}(x,y)\geq 0\right\}\cap \left\{(x,y): \overline{\frac{\partial u}{\partial x_1}}(x,y)\leq 0\right\},$$
which is $G_{\delta\sigma\delta}$. The same is true for all the other partial derivatives, hence the set $E_1$ is $G_{\delta\sigma\delta}$. Meanwhile, we also obtained that $\overline{\frac{\partial u}{\partial x_1}}$
 and $\underline{\frac{\partial u}{\partial x_1}}$ are Baire- 3 functions.

\begin{remark} We suspect that in fact $E_1$ is   $F_{\sigma\delta}$ and $E_2$ is $G_{\delta\sigma}$ at least for subharmonic functions $u$, as suggested by the special cases of continuous  \cite{MP}, and one variable functions  \cite{Ha}. If true, this would be optimal.\end{remark}

\section{Plurisubharmonic extension of subharmonic functions}

In this section we slightly generalize the main result of \cite{DD} that subharmonic functions which are plurisubharmonic outside a small set are actually plurisubharmonic.

Let $\Omega\subseteq \mathbb C^{n}$ be open and $u$ be a subharmonic function on $\Omega$. In addition to the sets $E_1$ and $E_2$  from Section \ref{borelstuff}, we consider also $E_{3}$ -  an arbitrary subset of $\Omega$ of zero Lebesgue measure which is disjoint  with $E_{1}\cup E_{2}$. Note that $E_3$ need not be Borel.

It is well known that any subharmonic function has a finite gradient almost everywhere, see e.g. \cite{Kr}.  This is somehow counter intuitive, as a subharmonic function  is merely upper semicontinuous, and the latter functions are  a priori  only continuous on a residual set which may happen to be of Lebesgue measure zero. More accurately, it is $e^{u}$ that is continuous on the residual set, as a subharmonic function can have a dense set of poles, and hence be nowhere continuous in the classical sense.   We note that, despite allowing a finite gradient on a big set, a subharmonic function, even a finite valued one, may be nowhere differentiable. All this allows us to assume that $E_2$ can be considered jointly with $E_3$ as \newline $\lambda^{2n}(E_2\cup E_3)=0$.

Let $E:=E_1\cup E_2\cup E_3$.
\begin{theorem}\label{DDD} Let $u$ be a subharmonic function in $\Omega$.
 Then if  $u$ is plurisubharmonic in some open neighborhood of $\Om\setminus E $ it is actually plurisubharmonic in the whole $\Om$. 
\end{theorem}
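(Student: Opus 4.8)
The goal is to upgrade $u$ from subharmonic to plurisubharmonic across $E = E_1 \cup E_2 \cup E_3$, given that $u$ is already plurisubharmonic in a neighborhood of $\Omega \setminus E$. Since plurisubharmonicity is equivalent to requiring that the restriction of $u$ to every complex line be subharmonic, and since the property is local, the plan is to fix an arbitrary complex line $L$ and show that $u|_{L \cap \Omega}$ is subharmonic. The key observation motivating the decomposition of $E$ is that $\lambda^{2n}(E_2 \cup E_3) = 0$, so on a generic complex line the trace of $E_2 \cup E_3$ is a set of zero planar (two-dimensional Lebesgue) measure, by Fubini applied after an integration over the Grassmannian of directions. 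Hence on almost every complex line the only potentially obstructing part of $E$ that has positive measure is (the trace of) the critical set $E_1$.

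**The main reduction.** I would invoke the main result of \cite{DD} — which this section is explicitly generalizing — to handle the contribution of $E_2 \cup E_3$. The prior work says that a subharmonic function which is plurisubharmonic off a \emph{Lebesgue-null} set extends plurisubharmonically. The novelty here is that $E_1$ is allowed in addition, and $E_1$ may well carry positive measure. So the heart of the argument is to show that adjoining the critical set $E_1$ costs nothing. The guiding principle is the elementary calculus fact emphasized in the introduction: at a point of $E_1$ the full gradient $\nabla u$ exists and vanishes. I expect the crucial structural input to be a Rad\'o-type or Gardiner--Sj\"odin-type statement: a function that is subharmonic away from its own critical set, and suitably regular, is subharmonic throughout. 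Concretely, after the $\mathcal \cite{DD}$ reduction disposes of $E_2 \cup E_3$, we are left with a subharmonic-outside-$E_1$ function, and I would argue that the critical set cannot obstruct the sub-mean-value inequality.

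**Carrying it out.** First I would record that plurisubharmonicity is a local, line-by-line condition and pass to restrictions to complex lines, noting that the relevant trace sets are measure-zero for almost all lines (Fubini/Grassmann integration) except for $E_1$. Second, I would apply \cite{DD} to reduce to the situation where the only exceptional set is the critical set $E_1$ — i.e. $u$ is subharmonic everywhere except possibly along $\{\nabla u = 0\}$. Third, I would exploit the fact that on $E_1$ the gradient exists and is zero together with the sub-mean-value characterization: since $u$ is already subharmonic on the complement of $E_1$, I would verify the sub-mean-value inequality at points of $E_1$ by a limiting or viscosity argument, using that a vanishing gradient forbids $u$ from sitting strictly above its circular averages (a genuine supersolution spike would force a nonzero one-sided derivative). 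The subharmonicity of $u|_L$ then follows on all of $L \cap \Omega$, and ranging over all lines $L$ yields plurisubharmonicity on $\Omega$.

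**The main obstacle.** The delicate step is the third one: controlling behavior along the critical set $E_1$, which unlike $E_2 \cup E_3$ is not negligible in measure and so cannot be dismissed by a capacity or measure-theoretic removability theorem alone. The difficulty is that subharmonicity is a statement about averages over full (real $2$-dimensional) disks in $L$, whereas the vanishing-gradient information is infinitesimal and one-dimensional in flavor. Bridging this gap — showing that the critical-point condition, which controls first-order behavior, suffices to rule out the failure of a second-order averaging inequality — is exactly where I expect the argument to require care, and where the interplay with the Gardiner--Sj\"odin result discussed in Section~4 becomes essential.
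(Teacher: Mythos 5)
Your proposal does not follow the paper's route and, more importantly, contains gaps that I do not see how to close. First, the reduction ``apply \cite{DD} to dispose of $E_2\cup E_3$, then treat $E_1$ separately'' is not available: the result of \cite{DD} requires $u$ to be plurisubharmonic outside a Lebesgue-\emph{null} set, whereas here $u$ is only assumed plurisubharmonic outside the whole union $E_1\cup E_2\cup E_3$, and $E_1$ may have positive measure --- this is precisely the interesting case, as the remark following the theorem stresses. One cannot peel off the null part of $E$ before the critical part has been handled, so \cite{DD} cannot be used as a black box; the paper instead reruns the entire viscosity/Alexandrov--Bakelman--Pucci argument of \cite{DD} and inserts a new case analysis. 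Second, the line-by-line reduction has its own problems: plurisubharmonicity requires subharmonicity on \emph{every} complex line, and upgrading from ``almost every line'' (which is all that Fubini over the Grassmannian gives for the traces of $E_2\cup E_3$) is not automatic for a merely upper semicontinuous $u$. Worse, the restriction $u|_L$ is not a priori subharmonic on $L$ (that is part of what must be proved), so there is no ambient subharmonicity on the line to feed into any planar removability theorem for the null trace $L\cap(E_2\cup E_3)$. Finally, your heuristic for the critical set --- that a vanishing gradient ``forbids $u$ from sitting strictly above its circular averages'' --- is false as stated: $-\Vert x\Vert^2$ has vanishing gradient at the origin and lies strictly above its circular averages there. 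The Gardiner--Sj\"odin mechanism is a doubling-of-variables argument, not a pointwise first-order one, and in any case it yields only subharmonic, not plurisubharmonic, extension through the critical set.

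What the sketch is missing is the paper's key new idea. Following \cite{DD}, one fixes $z_0\in E$ and a $C^2$ majorant $\varphi$ touching $u$ at $z_0$, sets $v_\delta=\varphi+\delta\Vert z\Vert^2-\delta^3-u$, and applies the ABP estimate to conclude that $v_\delta$ agrees with its convex envelope $\Gamma_{v_\delta}$ on a set of positive measure in $B_\delta$. If contact points outside $E$ exist along a sequence $\delta_j\searrow 0$, one concludes as in \cite{DD}. Otherwise the contact set lies in $E$, hence (since $\lambda^{2n}(E_2\cup E_3)=0$) meets $E_1$ in a set of positive measure. The new observation is that at a contact point in $E_1$ one has $\nabla u=0$, so $\nabla v_\delta=\nabla\psi_\delta$ with $\psi_\delta=\varphi+\delta\Vert z\Vert^2$ smooth; the gradient monotonicity of the convex function $\Gamma_{v_\delta}$ therefore transfers to $\psi_\delta$ on a set of positive measure, and a density-point argument forces the real (hence complex) Hessian of $\psi_\delta$ to be nonnegative definite at a point $z_\delta\to z_0$. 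Letting $\delta\searrow 0$ gives nonnegativity of the complex Hessian of $\varphi$ at $z_0$, which by the viscosity characterization (Lemma \ref{visc}) yields plurisubharmonicity. This transfer of convexity from the envelope to the smooth test function via the vanishing of $\nabla u$ on $E_1$ is the substance of the proof, and it is absent from your proposal.
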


\begin{remark}
	This theorem is only interesting for functions $u$ with a big critical set. If $E_1$ and hence $E $ is of Lebesgue measure zero, then Theorem \ref{DDD} is a corollary of the main result in \cite{DD}. However, even for smooth  $u$ the critical set of $u$ may be quite big, and the Morse-Sard theorem specifies the minimal regularity of $u$ that guarantees that at least the image $u(E_1)$ is small. Typically, we will think of $E_{1}$ as some nowhere dense Cantor- type set of positive Lebesgue measure.  
\end{remark}

As in \cite{DD}, we recall  (see \cite{H}, Proposition 3.2.10') that subharmonicity and plurisubharmonicity is equivalent to subharmonicity and respectively plurisubharmonicity in the viscosity sense. 
\begin{lemma}\label{visc}
	Let $\Om\subseteq \mathbb C^{n}$ be open. An upper semicontinuous function $u$ on $\Om$ is subharmonic (respectively plurisubharmonic) if for every $z_0\in\Om$ and every local  function $\varphi \in C^2$ defined near $z_0$
	and satisfying $\varphi(z)\geq u(z)$ with equality at $z_0$ we have $\Delta\varphi(z_0)\geq 0$ (respectively we have $\frac{\partial ^2 \varphi}{\partial z_j\partial\bar{z}_k} (z_0)\geq 0$, that is the complex Hessian is non negative definite).  
\end{lemma}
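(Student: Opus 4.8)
The plan is to prove both implications of the equivalence, handling subharmonicity first and deducing the plurisubharmonic case from it. The forward implication (subharmonicity implies the viscosity condition) is the soft one. Writing $M_\rho(f)$ for the spherical average of $f$ over $\partial B_\rho(z_0)$, suppose $u$ is subharmonic, $\varphi\in C^2$ satisfies $\varphi\ge u$ near $z_0$ and $\varphi(z_0)=u(z_0)$, and, for contradiction, $\Delta\varphi(z_0)<0$. By continuity $\Delta\varphi<0$ on a small ball, so $\rho\mapsto M_\rho(\varphi)$ is strictly decreasing and $M_\rho(\varphi)<\varphi(z_0)$ for small $\rho>0$; combining this with $u\le\varphi$ and the sub-mean value inequality for $u$ gives
\[
u(z_0)=\varphi(z_0)>M_\rho(\varphi)\ge M_\rho(u)\ge u(z_0),
\]
a contradiction. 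For plurisubharmonic $u$ I would restrict $u$ and $\varphi$ to an arbitrary complex line $\zeta\mapsto z_0+a\zeta$: the restriction of $u$ is subharmonic in $\zeta$ and $\varphi$ restricts to a $C^2$ upper test function, so the one–complex-variable case just established yields $\sum_{j,k}\frac{\partial^2\varphi}{\partial z_j\partial\bar z_k}(z_0)a_j\bar a_k\ge 0$, and since $a$ is arbitrary the complex Hessian is nonnegative definite.

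The converse is the substantive part, and the obstacle throughout is that $u$ is merely upper semicontinuous, so the pointwise viscosity information must be upgraded to a global statement with no regularity of $u$. For the subharmonic converse I would argue by the maximum principle. If $u$ were not subharmonic, comparison with harmonic functions fails: there is a ball $B=B_r(z_0)$ with $\overline B\subset\Omega$ and a harmonic $h\in C(\overline B)$ with $u\le h$ on $\partial B$ but $u(z_1)>h(z_1)$ for some $z_1\in B$. Set $w:=u-h$; since subtracting the harmonic $h$ leaves the Laplacian of any test function unchanged, $w$ inherits the viscosity property, is upper semicontinuous on $\overline B$, satisfies $w\le 0$ on $\partial B$ and $w(z_1)>0$. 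Introduce the perturbation
\[
w_\epsilon(z):=w(z)+\epsilon\big(|z-z_0|^2-r^2\big),
\]
which equals $w$ on $\partial B$ and, for $\epsilon$ small, stays positive at $z_1$; being upper semicontinuous it attains its maximum at an interior point $p$. If $\psi\in C^2$ touches $w_\epsilon$ from above at a point, then $\psi-\epsilon(|z-z_0|^2-r^2)+h$ touches $u$ from above there, whence $\Delta\psi\ge 2n\epsilon>0$ at that point. Taking $\psi$ to be the constant $w_\epsilon(p)$, which touches $w_\epsilon$ from above at its maximum $p$, gives $0\ge 2n\epsilon$, a contradiction; hence $u$ is subharmonic.

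For the plurisubharmonic converse the line-restriction trick is unavailable, because an upper test function along a line need not extend to one in $\mathbb{C}^n$. Here I would regularize by the sup-convolution
\[
u^\epsilon(z):=\sup_{w}\Big(u(w)-\tfrac{1}{2\epsilon}|z-w|^2\Big),
\]
which is finite and semiconvex on slightly shrunken subdomains, decreases pointwise to $u$ as $\epsilon\downarrow 0$ by upper semicontinuity, and preserves the viscosity condition (the standard fact that sup-convolutions of subsolutions are subsolutions). Being semiconvex, $u^\epsilon$ is twice differentiable almost everywhere, and at each such point the viscosity condition forces the classical complex Hessian to be nonnegative; since the distributional complex Hessian of a semiconvex function splits as its absolutely continuous part (the a.e. Hessian, here $\ge 0$) plus a positive singular part, $u^\epsilon$ is plurisubharmonic. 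Finally $u=\lim_{\epsilon\downarrow 0}u^\epsilon$ is a decreasing limit of plurisubharmonic functions, hence plurisubharmonic (the limit being $\equiv-\infty$ is excluded or trivial).

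I expect the main obstacle to be precisely this converse under the sole hypothesis of upper semicontinuity: the perturbed-maximum argument in the real case and the sup-convolution in the complex case are exactly the devices that convert the pointwise viscosity test into a global mean-value, respectively positivity, statement without assuming $u$ continuous. The two technical points I would need to state with care are the preservation of the viscosity property under sup-convolution and the transfer of the Hessian inequality to almost every point of twice differentiability; both are classical in viscosity theory and may alternatively be invoked through the cited Proposition 3.2.10$'$ of \cite{H}.
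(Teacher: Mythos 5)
The paper offers no proof of this lemma at all: it simply quotes it from H\"ormander \cite{H} (Proposition 3.2.10$'$), so your self-contained argument is by construction a different route, and it is essentially correct. Note first that the lemma only asserts the ``if'' direction, so your forward implication, while fine, is not needed. Your subharmonic converse --- comparison with a harmonic majorant on a ball, the quadratic perturbation $\epsilon(\Vert z-z_0\Vert^2-r^2)$ to push the maximum inside, and testing with the constant function at the interior maximum --- is the classical viscosity argument and is sound (only the constant is off: in $\mathbb C^n\cong\mathbb R^{2n}$ one gets $\Delta\psi\geq 4n\epsilon$ rather than $2n\epsilon$, which changes nothing; and the degenerate case $u\equiv-\infty$ on a component, for which the viscosity condition is vacuous, is a matter of convention in the statement itself). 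Your plurisubharmonic converse via sup-convolution, Alexandrov twice differentiability, and the positivity of the singular part of the distributional complex Hessian of a semiconvex function is also correct, and you rightly flag the two standard facts it leans on (preservation of the subsolution property under sup-convolution, and the transfer of the viscosity inequality to the a.e.\ Hessian); at a point of Alexandrov differentiability the Taylor polynomial plus $\delta\Vert z-p\Vert^2$ is itself an admissible upper test function, so this transfer needs no appeal to Jensen's lemma. What the citation to \cite{H} buys the paper is brevity; what your proof buys is independence from that reference and an explicit display of the mechanism that the rest of the paper (the ABP argument of Section 3) relies on. A cheaper route to the plurisubharmonic case, closer in spirit to H\"ormander's, is available: the complex-Hessian viscosity condition implies the viscosity condition for every constant-coefficient Laplacian $\sum a^{j\bar k}\partial_j\partial_{\bar k}$ with $(a^{j\bar k})>0$, hence by the real case $u\circ A$ is subharmonic for every $A\in GL(n,\mathbb C)$; this forces the distributional complex Hessian of the (now subharmonic, hence $L^1_{loc}$) function $u$ to be nonnegative, and mollification then gives plurisubharmonicity without any sup-convolution.
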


For some points $z_0\in\Om$ such a function $\varphi$ may not exist. It is known the set of points allowing the local $C^2$ majorant is dense
 for general upper semicontinuous $u$, but may well be countable. This is, however, enough to recover (pluri)subharmonicity. On the other hand for (pluri)subharmonic functions  the set of points allowing $\varphi$ is of full Lebesgue measure. 

Also note that in the language of viscosity theory the above lemma says that $u$ is subharmonic iff it is a viscosity subsolution to the Laplace equation $\Delta v=0$ (that is $\Delta u\geq 0$ in the viscosity sense) and $u$ is plurisubharmonic iff it is a viscosity subsolution to the {\it constrained complex Hessian} equation $\det^{+}\left(\frac{\partial ^2 v}{\partial z_j\partial\bar{z}_k}\right)= 0$ (that is 
$\det^{+}\left(\frac{\partial ^2 u}{\partial z_j\partial\bar{z}_k}\right)\geq 0$ in the viscosity sense), where 
$${\det}^{+}(A)=\begin{cases}
\det(A)\ & \text{ if }\ A\geq 0;\\
-\infty\ & \text{ otherwise }.
\end{cases}$$
For more details regarding the viscosity theory of such constrained complex Hessian equations we refer to \cite{EGZ} and \cite{Ze}.

As a direct corollary of Lemma \ref{visc} plurisubharmonicity of $u$ would follow if one can show that for any $z_0\in E $ and any local $C^2$ majorant $\varphi\geq u$, $\varphi(z_0)=u(z_0)$ one has 
$$\frac{\partial ^2 \varphi}{\partial z_j\partial\bar{z}_k} (z_0)\geq 0$$
as matrices.

To this end we closely follow the argument from \cite{DD}.

Suppose $u$ allows a local $C^2$ majorant at $z_0\in E $. Translating if necessary one may assume that $z_0$ is the origin, that $\Om$ contains a ball $B_{\delta_0}$ centered at the origin and that $\varphi$ is defined on $B_{\delta_0}$. For a fixed $0<\delta<\frac{\delta_0}2$ we consider the function

$$v_{\delta}(z):=\varphi(z)+\delta\Vert z\Vert ^2-\delta^3-u(z).$$
By the very definition of $\varphi$ we have $v_{\delta}(z)\geq 0$ on the collar $B_{2\delta}\setminus B_{\delta}$. Recall (see \cite{DD}), that $v_{\delta}$ is bounded below on $B_{\delta}$ and $v_{\delta}$ is a lower semicontinuous viscosity supersolution to the Poisson equation
$\Delta v= \Delta\varphi+4n\delta=:f$, that is
$$\Delta v_{\delta}\leq f.$$

Consider the following convex envelope in $B_{2\delta}$
$$\Gamma_{v_\delta}(z):=\sup\lbrace l(z)|\ l-\text{affine},\, l\leq v_{\delta}\, \text{ on }\  B_{\delta},\, l\leq 0\ \text{ on }\ B_{2\delta}\setminus B_{\delta} \rbrace.$$
As $v_\delta$ is bounded below, the family of functions $l$ above is non void.

Just as in \cite{DD}, we exploit the Alexandrov-Bakelman-Pucci estimate  for viscosity supersolutions (see Theorem 3.2 in \cite{CC} for continuous supersolutions and \cite{I} for merely lower semicontinuous supersolutions):

\begin{lemma}\label{ABP}
	Let $v_\delta$ and $\Gamma_{v_{\delta}}$ be as above. Then there is a universal constant $C$, which  depends only on $n$ such that
	$$\delta^3\leq \sup_{B_{\delta}}v_{\delta}^{-}\leq C\delta \left(\int_{\lbrace B_{\delta}\cap \lbrace v_{\delta}=\Gamma_{v_{\delta}}\rbrace\rbrace}\max\{f,0\}^{2n}d\lambda^{2n}\right)^{\frac1{2n}}.$$
\end{lemma}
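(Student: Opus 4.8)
The plan is to prove the Alexandrov-Bakelman-Pucci estimate stated in Lemma \ref{ABP} by combining the standard ABP machinery with the two structural facts already assembled: that $v_\delta$ is a lower semicontinuous viscosity supersolution of $\Delta v = f$ on $B_\delta$, and that $v_\delta \ge 0$ on the collar $B_{2\delta}\setminus B_\delta$ while $v_\delta(0) = \varphi(0) - \delta^3 - u(0) = -\delta^3$, so that $\sup_{B_\delta} v_\delta^- \ge \delta^3$, which establishes the leftmost inequality immediately. For the substantive middle-to-right inequality, I would first invoke the ABP estimate for viscosity supersolutions in the form given by \cite{I} (the lower semicontinuous version) or \cite{CC}: the key geometric object is the convex envelope $\Gamma_{v_\delta}$, and the estimate controls the negative part of $v_\delta$ by an integral of the positive part of the right-hand side $f$ over precisely the \emph{contact set} $\{v_\delta = \Gamma_{v_\delta}\}$ inside $B_\delta$.

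The heart of the argument is the normal-mapping (or gradient-image) technique. First I would observe that because $v_\delta \le 0$ on the collar but attains a value of $-\delta^3 < 0$ at the origin, the convex envelope $\Gamma_{v_\delta}$ dips below zero, and the depth of this dip is at least $\delta^3$; quantitatively, $\inf \Gamma_{v_\delta} = \inf_{B_\delta} v_\delta \le -\delta^3$. The classical ABP lemma then says that the set of slopes of supporting hyperplanes to the graph of $\Gamma_{v_\delta}$ covers a ball whose radius is comparable to $(\text{depth})/\delta$, so the Lebesgue measure of the normal image is bounded below by $c_n (\delta^3/\delta)^{2n} = c_n \delta^{4n}$ in real dimension $2n$. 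On the other hand, the area formula applied to the normal mapping, together with the fact that on the contact set $\Gamma_{v_\delta}$ touches $v_\delta$ from below and hence the Hessian determinant of $\Gamma_{v_\delta}$ is controlled by the supersolution inequality, bounds this same measure from above by an integral of $(\max\{f,0\})^{2n}$ over the contact set intersected with $B_\delta$. Comparing the two bounds and extracting the $2n$-th root yields exactly the stated inequality with a dimensional constant $C$.

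The main obstacle, and the place where care is genuinely needed, is justifying the Hessian bound on the contact set in the \emph{viscosity} sense rather than assuming classical differentiability of $v_\delta$. One cannot differentiate $v_\delta$ directly since $u$ is only upper semicontinuous; instead the argument runs through $\Gamma_{v_\delta}$, which, being convex, is twice differentiable almost everywhere by Alexandrov's theorem, and one must show that at almost every contact point the affine (or paraboloid) support from below is an admissible test function for the viscosity supersolution property of $v_\delta$, so that $\Delta \Gamma_{v_\delta} \le f$ there in the pointwise-a.e.\ sense. This is precisely the content of the supersolution ABP estimates of \cite{CC} and \cite{I}, so the cleanest route is to verify that $v_\delta$ and $f$ satisfy the hypotheses of those theorems and then quote them; the lower semicontinuity of $v_\delta$ and the continuity of $f = \Delta\varphi + 4n\delta$ (which holds since $\varphi \in C^2$) are exactly what the Ishii version \cite{I} requires.

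A technical point to confirm along the way is that the comparison is performed on $B_\delta$ with boundary data controlled on the collar: since $\Gamma_{v_\delta}$ is defined so that the competing affine functions satisfy $l \le 0$ on $B_{2\delta}\setminus B_\delta$ and $l \le v_\delta$ on $B_\delta$, the contact set is automatically contained in $\overline{B_\delta}$, and the nonnegativity of $v_\delta$ on the collar guarantees that no contact occurs there; this is what localizes the integral to $B_\delta \cap \{v_\delta = \Gamma_{v_\delta}\}$ and lets the factor of $\delta$ (the diameter scale) appear outside the integral. With these pieces in place the estimate follows verbatim from the cited ABP theorems, so I expect the proof in the paper to be essentially a careful citation combined with the lower bound $\sup_{B_\delta} v_\delta^- \ge \delta^3$.
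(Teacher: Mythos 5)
Your proposal is correct and matches the paper's treatment: the paper obtains the left inequality from $v_\delta(0)=\varphi(0)-\delta^3-u(0)=-\delta^3$ and the right inequality by directly citing the ABP estimate for lower semicontinuous viscosity supersolutions (Theorem 3.2 in \cite{CC} and \cite{I}), which is exactly the route you take, with your normal-mapping sketch simply unpacking the content of those cited theorems. No gaps.
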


The upshot is that for every $\delta\in \left(0,\frac{\delta_0}2\right)$the function $v_\delta$ matches its convex envelope $\Gamma_{v_{\delta}}$ on a set of positive measure within $B_{\delta}$.

There are two possibilities. In the first case there exists a sequence $\delta_{j}\searrow 0$ such that  that there is a point  $z_{\delta_{j}}\in B_{\delta_{j}}\cap \lbrace v_{\delta_{j}}=\Gamma_{v_{\delta_{j}}}\rbrace\setminus E $. Then, arguing as in \cite{DD}, we find that the smallest eigenvalue of the complex Hessian of $\varphi$ at $z_{\delta_{j} }$ is at least $-\delta_{j}$, meaning that the complex Hessian of $\varphi$ at the origin is non negative definite.

In the second case there is a fixed $\delta_1<\frac{\delta_0}2$  such that for all $0<\delta<\delta_1$ one has $B_{\delta}\cap \lbrace v_{\delta}=\Gamma_{v_{\delta}}\rbrace\subseteq E $.
As $B_{\delta}\cap\lbrace v_{\delta}=\Gamma_{v_{\delta}}\rbrace$ has positive Lebesgue measure, and $\lambda^{2n}(E_2\cup E_3)=0$, it follows that $B_{\delta}\cap\lbrace v_{\delta}=\Gamma_{v_{\delta}}\rbrace\cap E_1$ has positive measure.

We observe that on $E_1\cap \lbrace v_{\delta}=\Gamma_{v_{\delta}}\rbrace$ we have $\nabla \Gamma_{v_{\delta}}=\nabla v_{\delta}$, because $v_{\delta}-\Gamma_{v_{\delta}}$ attains a minimum, and $\nabla v_{\delta}=\nabla \psi_{\delta}$, where $\psi_{\delta}(z):= \varphi(z) +\delta\Vert z\Vert ^2$, because $\nabla u$ vanishes.

The following condition (''monotonicity of the gradient'') is equivalent to convexity for $C^1$ functions: For any $z,w\in B_{\delta}$ (treated as real points in $\mathbb R^{2n}$) one has
$$\langle \nabla \Gamma_{v_{\delta}}(z)-\nabla\Gamma_{v_{\delta}}(w),z-w\rangle\geq 0.$$
Thus, on $B_{\delta}\cap\lbrace v_{\delta}=\Gamma_{v_{\delta}}\rbrace\cap E_1$ we have $\langle \nabla \psi_{\delta}(z)-\nabla \psi_{\delta}(w),z-w\rangle\geq 0.$

As $B_{\delta}\cap\lbrace v_{\delta}=\Gamma_{v_{\delta}}\rbrace\cap E_1$ has positive Lebesgue measure, we pick a point of density $z_\delta$ in it, meaning that $$\lim_{\varepsilon\to 0^{+}}\frac{\lambda^{2n}( B_{\varepsilon}(z_{\delta})\cap B_{\delta}\cap \lbrace v_{\delta}=\Gamma_{v_{\delta}}\rbrace\cap E_1 )}{\lambda^{2n}(B_{\varepsilon})}=1.$$ We argue that the real Hessian of $\varphi +\delta\Vert z\Vert ^2$ is non negative definite at $z_\delta$. If this is not the case then there is a set $H$ of directions $\mathbb R^{2n}\ni v=(v_1,\ldots,v_{2n})\neq 0$ such that $$\sum_{j,k=1}^{2n}\frac{\partial ^2\psi_{\delta}}{\partial x_j\partial x_k} (z_{\delta})v_jv_k< c\Vert v\Vert^2$$ for a fixed negative $c$ strictly between the smallest eigenvalue of the Hessian and $0$.  Moreover, the spherical projection of this set, which we call $A$, on $S^{2n-1}$  is  of positive $2n-1$- dimensional spherical measure $\sigma^{2n-1} (A)>0$.  There are directions in  $H\cap B_{\delta}\cap\lbrace v_{\delta}=\Gamma_{v_{\delta}}\rbrace\cap E_1$ arbitrarily close to $z_\delta$, as otherwise the density at $z_{\delta}$ would be no greater than $\frac{\sigma^{2n-1}(S^{2n-1})-\sigma^{2n-1}(A)}{\sigma^{2n-1}(S^{2n-1})}<1$. We pick \newline $z_{m}\in H\cap B_{\delta}\cap\lbrace v_{\delta}=\Gamma_{v_{\delta}}\rbrace\cap E_1,\, m=1,2,\ldots$ such that $z_{m}\to z_\delta$ and put ${v_{m}}=z_{m}-z_{\delta}$.  Using that for $C^2$ functions the Hessian is the Jacobian of the gradient, we get 
$$\nabla \psi_{\delta}(z_m)= \nabla \psi_{\delta}(z_\delta)+ Jac(\nabla \psi_{\delta})(z_m-z_\delta)+o(\Vert z_m-z_\delta\Vert),$$ so we have
$$0>\frac{c}{2}>\frac{\sum_{j,k=1}^{2n}\frac{\partial ^2\psi_{\delta}}{\partial x_j\partial x_k} (z_{\delta})(v_m)_j(v_m)_k+o(\Vert {v_{m}}\Vert^2)}{\Vert{v_{m}}\Vert^2}$$
$$=\frac{\langle \nabla \psi_{\delta}(z_m)-\nabla \psi_{\delta}(z_{\delta}),z_{m}-z_{\delta}\rangle}{\Vert z_{m}-z_{\delta}\Vert^2}\geq 0.$$
  
When the real Hessian of a $C^2$ function is non negative definite then also the complex Hessian is non negative definite. Thus, the smallest eigenvalue of the complex Hessian of $\varphi$ at $z_\delta$ is at least $-\delta$. So again when $\delta\searrow 0$ we have $z_\delta\to 0=z_0$, and  again the complex Hessian of $\varphi$ at $z_0$ is non negative definite.


\section{Subharmonic and plurisubharmonic extension through the critical set}

   The key observation for this section is by Gardiner and S\"odin \cite{GS} (for harmonic functions this is due to Kr\'{a}l \cite{Kr}). It says that if $u$: $\Omega\rightarrow \mathbb{R}$ is $C^{1}$ on an open set $\Omega\subseteq \mathbb{R}^{n}, n\geq 2$ and subharmonic on $\{x\in\Omega:\nabla u(x)\neq 0\}$, then $u$ is subharmonic on all of $\Omega$. 
   
   We observe that the condition can be relaxed as follows.
   \begin{theorem}\label{GardinerSodin} Let $\Omega\subseteq \mathbb R^{n}$ be open, and let $u$ be a upper semicontinuous function on $\Omega$. Let $E_{1}\subseteq \Omega$ be the set where  $\nabla u$ exists and $\nabla u=0$. If $u$ is subharmonic on some open neighborhood of $\Omega\setminus E_1$ then $u$ is subharmonic on $\Omega$. 
   \end{theorem}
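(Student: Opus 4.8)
The plan is to reduce to the viscosity characterization of subharmonicity (Lemma~\ref{visc}) and then to argue essentially as in the proof of Theorem~\ref{DDD}, exploiting that here the ``bad'' set is contained outright in the critical set $E_1$. Let $G\subseteq\Omega$ be the open neighborhood of $\Omega\setminus E_1$ on which $u$ is subharmonic, and set $F:=\Omega\setminus G$. Then $F$ is relatively closed in $\Omega$ and $F\subseteq E_1$, so $\nabla u=0$ on $F$. Since subharmonicity is local and already holds on $G$, by Lemma~\ref{visc} it suffices to show that for every $x_0\in F$ and every $C^2$ function $\varphi$ defined near $x_0$ with $\varphi\geq u$ and $\varphi(x_0)=u(x_0)$ one has $\Delta\varphi(x_0)\geq 0$. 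Observe first that, since $\varphi-u\geq 0$ attains a minimum at $x_0$ and $u$ is differentiable there with $\nabla u(x_0)=0$, the elementary calculus fact recalled in the introduction forces $\nabla\varphi(x_0)=0$; thus $x_0$ is automatically a critical point of every upper test function, which is exactly the delicate regime.

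Fixing such an $x_0$ (translated to the origin) and imitating Section~3, I would introduce, for small $\delta>0$, the auxiliary function
\[ v_\delta(x):=\varphi(x)+\delta\Vert x\Vert^2-\delta^3-u(x). \]
It is lower semicontinuous, bounded below on $B_{2\delta}$ (as $u$ is upper semicontinuous, hence locally bounded above), satisfies $v_\delta\geq 0$ on the collar $B_{2\delta}\setminus B_\delta$ and $v_\delta(0)=-\delta^3<0$, and on $G$ it is a viscosity supersolution of $\Delta v\leq f:=\Delta\varphi+2n\delta$. Applying the convex-envelope/ABP machinery of Lemma~\ref{ABP} to $v_\delta$ and its envelope $\Gamma_{v_\delta}$, the contact set $B_\delta\cap\{v_\delta=\Gamma_{v_\delta}\}$ has positive Lebesgue measure for every $\delta$. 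From here I would split into the same two cases as in Section~3. If for some sequence $\delta_j\searrow0$ the contact set meets $G$ at a point $z_{\delta_j}$, then a supporting affine function of $\Gamma_{v_{\delta_j}}$ at $z_{\delta_j}$ produces a $C^2$ upper test function for $u$ at $z_{\delta_j}\in G$, and subharmonicity there gives $\Delta\varphi(z_{\delta_j})+2n\delta_j\geq0$; letting $j\to\infty$ with $z_{\delta_j}\to0$ yields $\Delta\varphi(0)\geq0$.

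In the remaining case $B_\delta\cap\{v_\delta=\Gamma_{v_\delta}\}\subseteq F\subseteq E_1$ for all small $\delta$, and here the hypothesis $\nabla u=0$ on $E_1$ is indispensable: on the contact set $\nabla v_\delta$ exists and equals $\nabla\psi_\delta$, where $\psi_\delta(x):=\varphi(x)+\delta\Vert x\Vert^2$, and at each contact point $\Gamma_{v_\delta}$ is differentiable with $\nabla\Gamma_{v_\delta}=\nabla v_\delta=\nabla\psi_\delta$. Choosing a density point $z_\delta$ of the (closed, hence measurable) contact set and combining the monotonicity of the gradient of the convex function $\Gamma_{v_\delta}$ with the $C^2$ Taylor expansion of $\psi_\delta$, exactly as in Section~3, I would conclude that the real Hessian of $\psi_\delta$ at $z_\delta$ is non-negative, whence $\Delta\varphi(z_\delta)\geq-2n\delta$; letting $\delta\searrow0$ (so $z_\delta\to0$) again gives $\Delta\varphi(0)\geq0$. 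In both cases $\Delta\varphi(x_0)\geq0$, so by Lemma~\ref{visc} $u$ is subharmonic at every $x_0\in F$, and together with subharmonicity on $G$ this gives subharmonicity on all of $\Omega$. The real setting is in fact cleaner than that of Theorem~\ref{DDD}: since $F\subseteq E_1$ outright there is no measure-zero part $E_2\cup E_3$ to discard from the contact set, and only the trace of the Hessian is needed. \emph{The main obstacle is precisely this second case}: legitimizing the density-point and gradient-monotonicity argument when the entire contact set sits inside the critical set, where no subharmonicity of $u$ is available and the sole usable information is the vanishing of $\nabla u$; verifying that $\Gamma_{v_\delta}$ is differentiable at the relevant contact points with gradient $\nabla\psi_\delta$ is the technical heart of the proof.
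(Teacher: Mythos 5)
Your overall strategy --- reduce to the viscosity test via Lemma \ref{visc} and rerun the convex-envelope/ABP argument of Section 3 --- has a genuine gap at its central step, and it is not the one you flag. The Alexandrov--Bakelman--Pucci estimate of Lemma \ref{ABP}, which is the sole source of the claim that the contact set $B_\delta\cap\{v_\delta=\Gamma_{v_\delta}\}$ has positive Lebesgue measure, requires $v_\delta$ to be a viscosity supersolution of $\Delta v=f$ on the \emph{whole} ball: its proof bounds the Monge--Amp\`ere measure of $\Gamma_{v_\delta}$ at every contact point by a power of $\max\{f,0\}$, and that bound \emph{is} the supersolution property at the contact point. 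In Theorem \ref{DDD} this is available because $u$ is assumed subharmonic on all of $\Omega$ and only the upgrade to plurisubharmonicity is at stake. Here $u$ is subharmonic only on $G$, so $v_\delta$ is a supersolution only on $G$; at points of $F\subseteq E_1$ the only information is that the partial derivatives of $u$ exist and vanish, which is first-order, coordinate-directional information and gives no control on the envelope's Monge--Amp\`ere measure there (recall that ``$\nabla u$ exists'' for $E_1$ means existence of partials, not Fr\'echet differentiability, so $u$ may still decay like $-\sqrt{|x_1x_2|}$ at such a point, i.e.\ super-linearly along non-coordinate directions, making $v_\delta$ grow like a cone). Consequently the contact set may a priori be a single point lying in $F$; your second case then cannot start (there is no density point of a null set) and the first case need not occur. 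The measure-theoretic input you need on $F$ is, in effect, the subharmonicity you are trying to prove, so the argument is circular exactly where the theorem has content.

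The paper's proof avoids this entirely by following Gardiner--Sj\"odin: a doubling-of-variables comparison $w(x,y)=h_{\varepsilon, j}(x)-u(y)+c\Vert x-y\Vert^{4}$ whose hypothetical interior minimum $(x_0,y_0)$ produces a $C^2$ function $\psi$ touching $u$ from above at $y_0$ with $\nabla\psi(y_0)=4c\Vert x_0-y_0\Vert^{2}(y_0-x_0)\neq0$, since $x_0\neq y_0$ is forced by $\Delta\varphi(x_0)\leq-\varepsilon$. If $y_0\in E_1$ this contradicts the elementary calculus fact $\nabla u(y_0)=\nabla\psi(y_0)=0$; if $y_0\notin E_1$, superharmonicity of $s\mapsto w(x_0+s,y_0+s)$ near $0$ gives the contradiction. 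The decisive gain of the doubling trick is precisely that the touching function it manufactures has nonzero gradient, so the hypothesis $\nabla u=0$ on $E_1$ can be exploited pointwise with no measure theory at all. As written, your ABP step does not go through, and the proposal does not prove the theorem.
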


\begin{proof} We follow closely \cite{GS} in what follows and modify the argument only slightly to cover the more general setting. Since $u$ allows a gradient on $E_{1}$ it is finite valued there. As $u$ is subharmonic outside $E_{1}$, we have $u(x)<\infty$ for each $x \in \Omega$.
	
	As $u$ is upper semicontinuous, it is a pointwise limit of a decreasing sequence of continuous functions $u_{j}$.  Let $\varepsilon>0$ and $B$ be the open ball $\{x:\Vert x-x_{1}\Vert<r\}$ such that $\overline{B}\subseteq\Omega$. Let $v_{j}$ be the Poisson integral of $u_{j}$ in $B$	
	$$v_{j}(y)=\frac{1}{\sigma^{n-1}(S^{n-1})}\int_{\partial B}\frac{r^2-\Vert y-x_1\Vert^2}{r\Vert x-y\Vert^{n}}u_{j}(x)\, d\sigma^{n-1}(x).$$ We define 
	$$h_{\varepsilon, j}(x)=v_{j}(x)+\varepsilon\left(1+\frac{r^{2}-\Vert x-x_{1}\Vert^{2}}{2n}\right).
	$$
	The function $h_{\varepsilon, j}$ satisfies $h_{\varepsilon, j}\in C(\overline{B})\cap C^{\infty}(B)$, and solves the Dirichlet problem
	$$\left\{\begin{array}{ll}
	\Delta  h_{\varepsilon, j}=-\varepsilon & \text { in }\ B,\\
	h_{\varepsilon, j}=u_{j}+\varepsilon\geq u+\varepsilon & \text{ on } \partial B.
	\end{array}\right.$$
	
	It will be enough to show that $h_{\varepsilon, j}\geq u$ in $B$, since  then	
	$$u(x_1)\leq h_{\varepsilon, j}(x_1) =\frac{1}{\sigma^{n-1}(S^{n-1})}\int_{\partial B}\frac{u_{j}(x)}{r^{n-1}}\, d\sigma^{n-1}(x)+\varepsilon\left(1+\frac{r^2}{2n}\right).$$
	The latter expression, using monotone convergence, converges to 
	$$\frac{1}{\sigma^{n-1}(\partial B)}\int_{\partial B}u(x)\, d\sigma^{n-1}(x),$$ when $j\to\infty,\, \varepsilon\searrow 0$, and we obtain that $u$ satisfies the spherical mean value inequality. This yields subharmonicity.

	The set
	$$
	O=\left\{(x, y)\in\overline{B}\times\overline{B}:h_{\varepsilon, j}(x)-u(y)>\frac{\varepsilon}{2} \right\}
	$$
	is relatively open in $\overline{B}\times\overline{B}$ since $u$ is upper semicontinuous, and contains 
$$\{(x, x):x\in\partial B\}.$$

 Thus, the quantity $\Vert x-y\Vert^{4}$ is bounded away from zero on $\partial(B\times B)\backslash O$. Also, finite upper semicontinuous functions are bounded above on compact sets, and so we may choose $c>0$ large enough so that $w>0$ on $\partial(B\times B)$ , where
	$$
	w(x, y)=h_{\varepsilon, j}(x)-u(y)+c\Vert x-y\Vert^{4},\ x, y\in\overline{B}.
	$$
	We suppose, for the sake of contradiction, that the minimum value of the lower-semicontinuous function $w$ on $\overline{B}\times\overline{B}$ is attained at some interior point \newline $(x_{0}, y_{0})\in B\times B$. We have
		$$h_{\varepsilon, j}(x)-u(y)+c\Vert x-y\Vert^{4}\geq h_{\varepsilon, j}(x_{0})-u(y_{0})+c\Vert x_{0}-y_{0}\Vert^{4},\ x, y\in\overline{B}.$$
		Now we define
		$$
		\varphi(x):=h_{\varepsilon, j}(x_{0})+c(\Vert x_{0}-y_{0}\Vert^{4}-\Vert x-y_{0}\Vert^{4})\ ,\ x\in\overline{B}.
		$$ Setting $y=y_{0}$, we obtain the inequality
		$$ h_{\varepsilon, j}-\varphi\geq 0.$$
	Further,  $ h_{\varepsilon, j}-\varphi$ is $C^2$ and attains its minimum value at $x_{0}$, so  it's Hessian is non negative definite at $x_0$. Hence,
	$$\Delta \varphi(x_{0})\leq\Delta  h_{\varepsilon, j}(x_{0})=-\varepsilon.
	$$
	In particular, $x_{0}\neq y_{0}$ since $\Delta \varphi(y_{0})=0.$
	
	Similarly, if we define
	$$
	\psi(y):=u(y_{0})+c(\Vert x_{0}-y\Vert^{4}-\Vert x_{0}-y_{0}\Vert^{4})\ ,\ y\in\overline{B},
	$$
	the inequality $$h_{\varepsilon, j}(x)-u(y)+c\Vert x-y\Vert^{4}\geq h_{\varepsilon, j}(x_{0})-u(y_{0})+c\Vert x_{0}-y_{0}\Vert^{4},\ x, y\in\overline{B}$$
	transforms to
	$$u-\psi\leq 0,$$
	 by setting $x=x_{0}$. 
	 
	 Suppose that $y_0\in E_{1}$.	Since $ u-\psi$  and attains its maximum value $0$ at $y_{0}$, then   $$0=\nabla u(y_{0})=\nabla\psi(y_{0})\neq0,$$ because $x_{0}\neq y_{0}$ and the gradient of $\psi$ vanishes only there. But if  $y_0\not\in E_1$,  by hypothesis, the formula
	$$
	v(s)=w(x_{0}+s,\ y_{0}+s)=h_{\varepsilon, j}(x_{0}+s)-u(y_{0}+s)+c\Vert x_{0}-y_{0}\Vert^{4}
	$$
	defines a function which is superharmonic on some neighborhood of $0$ in $\mathbb{R}^{n}$. Since $v$ attains a local minimum at $0$, it must be constant near $0$. However, this leads to the contradictory conclusion that $u\in C^{\infty}$ and $\Delta  u=-\varepsilon<0$ near $y_{0}.$
	
	The theorem now follows, because
	$$
	\min_{x\in\overline{B}}(h_{\varepsilon, j}(x)-u(x))=\min_{x\in\overline{B}}w(x, x)\geq\min_{(x,y)\in\overline{B}\times\overline{B}}w(x,y)=\min_{(x,y)\in\partial(B\times B)}w(x,y)\geq 0.
	$$

	\end{proof}
   
The plurisubharmonic case is now easy. Having an upper semicontinuous function which is plurisubharmonic on some open neighborhood of $E_1$, we first extend is as a subharmonic  function on the whole of $\Omega$ by Theorem \ref{GardinerSodin}. Now this subharmonic function is actually plurisubharmonic by Theorem \ref{DDD} with $E_2=E_3=\emptyset$.

\section{Rad\'{o}- type theorem in the $C^{1,p}$ and $C^{1}$ case}
This section deals with the main theorem:
\begin{theorem}\label{rado}
 Let $\Omega$ be open in $\mathbb R^{n}$ (respectively in $\mathbb C^{n}$) and $E \subseteq \Omega$ be a Borel set.	If $u\in C^{1,p}(\Omega),\, p\in(0,1]$ is  subharmonic (respectively plurisubharmonic) in some open neighborhood of $\Omega\setminus E $ and the Hausdorff measure $\mathcal H^{p}(u(E))=0$   then $u$ is actually subharmonic (respectively plurisubharmonic) in $\Omega$. If $u\in C^{1}(\Omega)$ then the same conclusion holds if $u(E)$  is at most countable. The results are optimal with respect to the size of the image of $E $.
\end{theorem}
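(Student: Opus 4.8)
The plan is to reduce the statement to the two extension mechanisms developed earlier: Theorem \ref{GardinerSodin} for the behaviour along the critical set $E_1$, and the Hausdorff-measure hypothesis on $u(E)$ to control everything off the critical set. The key structural observation is that the obstruction to subharmonicity can only occur where $u$ fails to be subharmonic a priori, i.e.\ on $E$, and among those points the image constraint $\mathcal H^p(u(E))=0$ will force the bad set to sit inside $E_1$, where Theorem \ref{GardinerSodin} takes over. So the first step is to decompose $E=(E\cap E_1)\cup(E\setminus E_1)$, where $E_1=\{x:\nabla u(x)=0\}$ is the critical set of the given $C^1$ (or $C^{1,p}$) function, and to argue that $u$ is already subharmonic in a neighborhood of every point of $E\setminus E_1$.

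To make that reduction work I would exploit the regularity hypothesis quantitatively. On $\Omega\setminus E_1$ the gradient $\nabla u$ is nonzero, so near such a point $u$ is a genuine submersion onto $\mathbb R$; the $C^{1,p}$ (resp.\ countable-image) hypothesis on $u$ together with $\mathcal H^p(u(E))=0$ is exactly what is needed to show that $E$ meets a neighborhood of that point in a set that $u$ sends to a measure-zero (resp.\ countable) target, and hence that the level-set foliation lets one propagate subharmonicity across $E$. Concretely, I would show that $\{x\in\Omega\setminus E_1: u\text{ is not subharmonic near }x\}$ is empty: the Hölder modulus of $\nabla u$ controls how fast $u$ varies transversally to its level sets, and the vanishing of $\mathcal H^p$ of the image keeps the exceptional set too thin in the image to obstruct the mean-value inequality. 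This is the place where the exponent matching between $C^{1,p}$ and $\mathcal H^p$ is essential, and where the countable-image case for $C^1$ is the limiting analogue.

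Once $u$ is known to be subharmonic on an open neighborhood of $\Omega\setminus E_1$, the problem is precisely the hypothesis of Theorem \ref{GardinerSodin}: $u$ is upper semicontinuous on $\Omega$, the set $E_1$ is where $\nabla u$ exists and vanishes, and $u$ is subharmonic off a neighborhood of $E_1$. Applying that theorem yields subharmonicity on all of $\Omega$ in the real case. In the plurisubharmonic (complex) case I would first run the above argument to extend $u$ subharmonically across $\Omega$, and then invoke the mechanism recorded after Theorem \ref{GardinerSodin}: the resulting subharmonic function is plurisubharmonic outside a neighborhood of $E_1$, and Theorem \ref{DDD} (with $E_2=E_3=\emptyset$, since the $C^1$ gradient exists everywhere) upgrades it to a plurisubharmonic function on $\Omega$.

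The main obstacle I expect is the reduction step off the critical set, namely verifying that $\mathcal H^p(u(E))=0$ genuinely forces subharmonicity near each point of $E\setminus E_1$. The delicate point is that $E$ itself may be large (positive measure, or even dense), so one cannot argue by smallness of $E$ in the domain; everything must come through the image. I would handle this by a local change of coordinates straightening the level sets of $u$ near a noncritical point, reducing to a one-dimensional statement about a function whose exceptional set projects to an $\mathcal H^p$-null subset of $\mathbb R$, and then using the $C^{1,p}$ control to show the mean-value inequality survives. Sharpness — the final sentence of the statement — would follow from the Pokrovskiĭ-type examples $-|x_1|$ and $-|\mathrm{Re}\,z_1|$ cited in the introduction, suitably rescaled to show that neither $\mathcal H^p(u(E))=0$ nor the countable-image condition can be weakened.
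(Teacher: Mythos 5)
Your overall skeleton agrees with the paper's: split along the critical set $E_1=\{\nabla u=0\}$, use the image condition to handle $E\setminus E_1$, use Theorem \ref{GardinerSodin} (really, the original result of \cite{GS}, since $u\in C^1$) for $E\cap E_1$, and upgrade to plurisubharmonicity via Theorem \ref{DDD}. But the step you yourself identify as the main obstacle is left as a sketch precisely where the real content lies. On $\Omega_0=\{\nabla u\neq 0\}$ the correct mechanism is: by the implicit function theorem $E\cap\Omega_0\subseteq u^{-1}(u(E))$ is covered by $C^1$ level hypersurfaces lying over an $\mathcal H^{p}$-null (resp.\ at most countable) set of values, whence $\mathcal H^{n-1+p}(E\cap\Omega_0)=0$ (resp.\ $E\cap\Omega_0$ is a countable union of $C^1$ hypersurfaces); one then invokes the known removability theorems for subharmonic functions in $C^{1,p}$ (Sadullaev--Yarmetov \cite{SY}, see also \cite{P}) and in $C^{1}$ (Yarmetov \cite{Y}). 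Your plan to ``straighten the level sets and show the mean-value inequality survives'' amounts to reproving these theorems from scratch; that is not a routine verification and, as written, this step is not a proof. A second, smaller gap: in the complex case you apply Theorem \ref{DDD} with $E_2=E_3=\emptyset$, which requires $u$ to be plurisubharmonic on a neighborhood of $\Omega\setminus E_1$; after your subharmonic extension you only know plurisubharmonicity near $\Omega\setminus E$, and the points of $E\cap\Omega_0$ are not in $E_1$. You must either first establish plurisubharmonicity on $\Omega_0$ (the paper does this by applying Theorem \ref{DDD} there with $E_3=E\cap\Omega_0$, which is Lebesgue-null) or take $E_3=E\cap\Omega_0$ in the final application.

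The sharpness argument is genuinely wrong. The functions $-|x_1|$ and $-|\mathrm{Re}\,z_1|$ are Lipschitz but not $C^1$, so no rescaling of them lives in the regularity classes under discussion; they only show that the differentiability hypothesis cannot be dropped, not that the image-size conditions are optimal \emph{within} $C^{1,p}$ or $C^1$. Optimality means: for every $\varepsilon>0$ there is a $C^{1,p}$ function, locally affine off a relatively closed set $E$ with $0<\mathcal H^{p}(u(E))<\varepsilon$, which is not subharmonic (and, in the $C^1$ case, an example with $u(E)$ uncountable). This requires Kr\'al's one-variable construction \cite{Kr}: starting from a target set $G$ with small positive $\mathcal H^p$ measure one forms $\alpha(x)=\mathcal H^{p}(\{t\in G:t\leq x\})$, integrates, inverts to get a concave non-affine $C^{1,p}$ function $\gamma$, and sets $u(x)=\gamma(x_1)$; the $C^1$ case needs in addition a compact perfect set and a gauge function giving it positive generalized Hausdorff measure. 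None of this is captured by your proposal.
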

 \begin{remark} The assumption that $E$ is Borel may seem artificial. As $u$ is (pluri)sub-\newline harmonic on some open neighborhood $\Omega'$ of $\Omega\setminus E$, we could have taken the relatively closed set $\Omega\setminus\Omega'\subseteq E$ instead of $E$. Because $u$ is continuous, $u(E)$ is then a $F_\sigma$ set and $u^{-1}(u(E))$  is relatively $F_\sigma$ in $\Omega$. This means that we could have confined ourselves to just relatively closed or relatively $F_\sigma$ sets $E$. However, we hope  that similar ideas may be useful to study extension theorems  with minimal assumptions (less than Lipschitz but stronger in another directions), and without the continuity the above argument fails. It turns out that assuming $E$ is a Borel set is by no means a greater restriction than assuming it to be relatively closed in the discontinuous setting, whereas the failure of $E$  to be Borel or Souslin leads to considerable measure theoretic difficulties explained in  Remark \ref{Borel}. So we made this choice out of convenience.
 	\end{remark}
\begin{remark}\label{Borel}  In \cite{Kr}, where the analogous theorem is proved in the harmonic setting, the conditions are expressed somewhat more technically using inner Hausdorff measures to tackle supersets of  $u(E)$ which are not necessarily Hausdorff measurable. In our case this can be avoided by using the following seemingly not widely known facts.  As $E$ is Borel and $u$ is at least upper semicontinuous, hence a Borel mapping, $u(E)$ is Lebesgue measurable but not necessarily Borel, see \cite{B} Theorem 6.7.3. This is not enough to conclude that  $u(E)$ is also Hausdorff measurable, as even the continuous image of a Borel set is not necessarily Borel, and the $\sigma$- algebras of Lebesgue measurable and $\mathcal H^{p}$- measurable sets in $\mathbb R^{n}$ are in general different if $0<p<n$, as the example of a non-Lebesgue measurable set situated in some lower-dimensional subspace of $\mathbb R^{n}$ demonstrates. However, $u(E)$ is an analytic set (also known as Souslin set or ${\bf \Sigma_{1}^{1}}$- set in the projective hierarchy), as the Borel image of a Borel set, see Theorem 6.7.3 in \cite{B}. Analytic sets are measurable with respect to any Borel measure, see Corollary 2.12.7 or Theorem 7.4.1 \cite{B} or Theorem 26 in \cite{R}. As $\mathcal H^{p}$ is a Borel measure (Proposition 3.10.9 in \cite{B} or Theorem 27 in \cite{R}) it follows that $u(E)$ is $\mathcal H^{p}$- measurable. Now the property ''every compact subset has vanishing $\mathcal H^{p}$ measure'' is equivalent to being of zero $\mathcal H^{p}$ measure for analytic sets, see Corollary 2.10.48 in \cite{F} (this assertion is specific for $\mathbb R^n$).  Also the property ''every compact subset is at most countable'' from \cite{Kr} is equivalent to being at most countable for analytic sets, see Corollary 6.7.13 in \cite{B}. The latter is not true for arbitrary sets, as the so-called Bernstein sets demonstrate.\end{remark}
\begin{remark}
	The subharmonic part of Theorem \ref{rado} is essentially settled in \cite{GS}, as it can be easily deduced from there using only results which are already well-established. We provide the argument but we take no credit for it. What is substantially new is the plurisubharmonic  part of the theorem.
\end{remark}
\begin{proof}   We set $\Omega_0\subseteq \Omega$ to be the set where $\nabla u \neq 0$. It is clearly an open set which can further disconnect the connected components of $\Omega$. Also some part of $E $ can be contained in $\Omega_0$. We start with the $C^{1,p}$ case, as the continuously differentiable part requires a slightly different approach. As in Theorem $1$ in \cite{Kr}, it follows that $\Omega_0\cap E $ has zero Hausdorff measure of dimension  $n-1+p$ (respectively $2n-1+p$). This is essentially a consequence of the implicit function theorem. As $u$ is also subharmonic in $\Omega_0\setminus E $, we use  the extension result of \cite{SY} (see also \cite{P}), to conclude that $u$ is subharmonic on $\Omega_0$. In the $C^{1}$ case we have to use the subharmonic extension result \cite{Y}. In the plurisubharmonic case we can now use Theorem \ref{DDD}   to conclude that $u$ is plurisubharmonic in $\Omega_0$, as $E_3=E \cap \Omega_0$ is of Lebesgue measure zero.   It remains to extend the (pluri)subharmonicity through $E \cap(\Omega\setminus \Omega_0)$. But on this set one has $ \nabla u=0$ and, as $u\in C^{1}$- this follows form \cite{GS} in the subharmonic case. In the plurisubharmonic case we first extend $u$ as subharmonic function on $\Omega$ and use Theorem \ref{DDD} afterwards.
	
	The same examples as in \cite{Kr} demonstrate that for any $\varepsilon>0$ there is a relatively closed set $E \subseteq \Omega$ and a $C^{1,p}$ function $u$ such that $0<\mathcal H^{p}(u(E))<\varepsilon$, $u$ is not subharmonic on $\Omega$ but is subharmonic (actually locally affine) on $\Omega\setminus E $. The construction is sketched as follows. Let $G=u(E)$ and put
	$$
	\alpha(x):=\mathcal{H}^{p}(\{t\in G:t\leq x\})=\int_{-\infty}^{x}\chi_{G}(t)d \mathcal{H}^{p}(t)\ ,\ x\in \mathbb{R}
	$$	
	We fix an open interval $J$ with $\displaystyle \inf_{x\in J}\alpha(x)>0$ on which $\alpha$ is non constant and define
	$$
	\beta(t):=q^{2} \int_{-\infty}^{t}\alpha(x)dx,\quad t\in J,
	$$
	where the constant $q\geq 1$ is chosen in such a way that $q\alpha (x)\geq 1$ for $x\in J$. Then $\beta$ maps $J$ increasingly on an open interval $I$ and $\beta'\geq 1$ on $J$.  We denote by $\gamma:I\rightarrow J$ the corresponding inverse mapping $\gamma=\beta^{-1}$. 
Now $u(x_1,\ldots,x_n):=\gamma(x_1)$ is the required counterexample in the $C^{1,p}$ setting. The only thing we need to add to \cite{Kr} is that $\gamma$ is concave.

	 If $u(E)$ is uncountable then it contains a compact perfect set $H$. By Corollary 4 to Theorem 35 in \cite{R} there is a gauge function $h$ (also known as dimension function) such that the generalized Hausdorff measure (see \cite{R}) $\mathcal H^{h}(H)>0$. Then, as in Lemma 1 of \cite{Kr}, there exists a $C^1$ function $\gamma$, even such that the modulus of continuity of $\gamma'$ is $h$, defined on some open interval $I$ such that $\gamma'$ is locally affine on $I\setminus \gamma^{-1}(H)$ but not globally affine. Now $u(x_1,\ldots,x_n)=\gamma(x_1)$ is the required counterexample in the $C^{1}$ setting. 
	\end{proof}

In the next theorem we deal with more regular functions:
\begin{theorem}
Let $\Omega$ be open in $\mathbb R^{n}$ (respectively in $\mathbb C^{n}$) and $E\subseteq \Omega$ be a Borel set.	If $u\in C^{2}(\Omega)$ is  subharmonic (respectively plurisubharmonic) in some open neighborhood of $\Omega\setminus E$ then $u$ extends subharmonically (respectively plurisubharmonically) to the whole  $\Omega$ if and only if $u(E)$ has empty interior.
\end{theorem}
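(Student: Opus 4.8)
The plan is to reduce the statement, in both the real and the complex case, to an elementary property of the open set on which the relevant second-order inequality fails, and then to clear that set with an open mapping argument; the only delicate issue, as the introduction warns, occurs along the critical locus, but it turns out to dissolve.

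First I would use that a $C^2$ function is subharmonic (respectively plurisubharmonic) on an open set exactly when $\Delta u \geq 0$ (respectively when the complex Hessian $\left(\frac{\partial^2 u}{\partial z_j \partial \bar z_k}\right)$ is positive semidefinite) there. Accordingly I set, in the real case,
$$N := \{x \in \Omega : \Delta u(x) < 0\},$$
and in the complex case I let $N$ be the locus where the smallest eigenvalue of the complex Hessian is negative. Because $u \in C^2$, the function $\Delta u$ (respectively the smallest eigenvalue of the Hessian, which depends continuously on the entries) is continuous, so $N$ is open; and since $u$ is (pluri)subharmonic on a neighborhood of $\Omega \setminus E$, the pertinent inequality holds there, whence $N \subseteq E$. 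Finally $u$ is (pluri)subharmonic on all of $\Omega$ precisely when $N = \emptyset$, so the heart of the matter is to show that $\mathrm{int}\, u(E) = \emptyset$ forces $N = \emptyset$.

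The key claim is that a nonempty $N$ always has image $u(N)$ with nonempty interior. This is exactly where the critical set $\{\nabla u = 0\}$ threatens the argument: at critical points $u$ is not open, and one fears that $N$ might be entirely critical. It cannot be: if $\nabla u$ vanished identically on the nonempty open set $N$, then $u$ would be locally constant there, all its second derivatives would vanish on $N$, and so $\Delta u$ (respectively the complex Hessian) would be zero on $N$, contradicting the definition of $N$. Hence there is a point $p \in N$ with $\nabla u(p) \neq 0$. Viewing $\mathbb C^n$ as $\mathbb R^{2n}$ when necessary and replacing one real coordinate by the (real-valued) function $u$, one obtains a map with invertible Jacobian at $p$, hence a local diffeomorphism; thus $u$ is open near $p$ and carries a neighborhood $V \subseteq N$ of $p$ onto an open interval. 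Therefore $u(N) \supseteq u(V)$ has nonempty interior.

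Granting the claim, the proof concludes at once. If $\mathrm{int}\, u(E) = \emptyset$ but $N \neq \emptyset$, then $N \subseteq E$ gives $\mathrm{int}\, u(E) \supseteq \mathrm{int}\, u(N) \neq \emptyset$, a contradiction; so $N = \emptyset$ and $u$ is (pluri)subharmonic on $\Omega$, which is the substantive implication. The converse is the same claim read contrapositively: whenever $u$ fails to be (pluri)subharmonic we have $N \neq \emptyset$, and then $u(E) \supseteq u(N)$ must have nonempty interior, so the empty-interior hypothesis, made explicit by the elementary examples, is necessary and cannot be weakened. I expect the main (though ultimately surmountable) obstacle to be precisely the behaviour on the critical set; should one wish to avoid the open mapping step there, one may instead extend across $\Omega_0 = \{\nabla u \neq 0\}$ as in Theorem \ref{rado} and then cross $\{\nabla u = 0\}$ by Theorem \ref{GardinerSodin}, invoking Theorem \ref{DDD} in the plurisubharmonic case.
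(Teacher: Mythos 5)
Your argument for the substantive implication is correct, and it takes a genuinely different and more economical route than the paper. The paper splits $\Omega$ into the noncritical set $\Omega_0=\{\nabla u\neq 0\}$ and its complement: on $\Omega_0$ it uses that $u$ is an open map so that $u^{-1}(\mathbb R\setminus u(E))\cap\Omega_0$ is dense in $\Omega_0$, concludes by continuity of the Laplacian (resp.\ complex Hessian), and then crosses the critical set by invoking the Gardiner--Sj\"odin extension theorem (Theorem \ref{GardinerSodin}), with Theorem \ref{DDD} layered on top in the plurisubharmonic case. You instead work directly with the open failure set $N$ and make the key observation that $N$ cannot be entirely critical --- if $\nabla u\equiv 0$ on the open set $N$ then $u$ is locally constant there and the second-order quantity vanishes, contradicting the definition of $N$ --- so the open-mapping step at a single noncritical point of $N$ already finishes the proof. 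This renders the entire critical-set machinery (Sections 3 and 4 of the paper) unnecessary for the $C^2$ theorem and is, in that sense, a genuine simplification; the paper's route has the advantage of being uniform with the $C^{1,p}$ and $C^1$ cases, where $N$ is not open and your shortcut is unavailable.

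One piece is missing: the ``only if'' half. Note that the literal converse is false (take $u(x)=\Vert x\Vert^2$ and $E=\Omega$ the unit ball: $u$ is subharmonic everywhere yet $u(E)=[0,1)$ has nonempty interior), so the intended content is sharpness --- for any prescribed interval inside $\operatorname{int}u(E)$ one must exhibit a $C^2$ function, plurisubharmonic off some $E$ with $u(E)$ inside that interval, which does not extend. Your ``contrapositive'' paragraph merely restates the forward implication and does not supply such an example; the paper does, via $u(z)=\vert z\vert^4-2\vert z\vert^2+1$ on $E=\{\vert z\vert\leq 1/\sqrt2\}$ post-composed with an increasing affine map squeezing $[\tfrac14,1]$ into the given interval. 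You should add an explicit construction of this kind to complete the equivalence.
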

 
\begin{proof}
	It is clear that every $C^2$ function which is (pluri)subharmonic outside $E $ is actually (pluri)subharmonic through $E $ if and only if $E $ has empty interior, as the Laplacian (respectively the complex Hessian) is continuous and non negative on the dense complement of $E $. As above, we define $\Omega_0$ to be the noncritical set of $u$. As $u:\Omega_0\to\mathbb R$ is a submersion, it is a continuous open mapping. Now, assuming that $\mathbb R\setminus u(E)$ is dense, $u^{-1}(\mathbb R\setminus u(E))\cap\Omega_0$  is contained in  $\Omega_0\setminus E $ and dense in $\Omega_0$, hence $u$ is (pluri)subharmonic on $\Omega_0$. Now $u$ extends also through $E \cap(\Omega\setminus\Omega_0)$, as $\nabla u=0$ there. The conclusion is that for $C^2$ functions the extension result holds if and only if $u(E)$ has dense complement, for if $u(E)$ has nonempty interior it contains an interval $(a,b)$ and a counterexample  can be produced as follows. Let  $u(z)=|z|^4-2|z|^2+1$, $E =\left\{z\in\mathbb C: |z|\leq \frac{1}{\sqrt{2}}\right\}$, $u(E)=\left[\frac{1}{4},1\right]$. Let $\alpha:\mathbb  R\to\mathbb R$ be an affine increasing function, such that $\alpha(\left[\frac{1}{4},1\right]) \subseteq (a,b)$. Now $u=\alpha\circ u$ is an evident counterexample.
	
	The same is of course true when $u$ is more regular than $C^2$. 
\end{proof}

\section{Applications}
In this last section we briefly collect some direct corollaries to harmonic and pluriharmonic extension problems.

By applying Theorem \ref{rado} to the subharmonic $u$ and superharmonic $-u$ we recover the following special case of the theorem of Kr\'{a}l \cite{Kr}:

 \begin{theorem}
 	Let $\Omega$ be open in $\mathbb R^{n}$  and $E\subseteq \Omega$ be a relatively closed set.\newline	If $u\in C^{1,p}(\Omega),\, p\in(0,1]$ is  harmonic  in $\Omega\setminus E$ and the Hausdorff measure\newline $\mathcal H^{p}(u(E))=0$  then $u$ is actually harmonic  in $\Omega$. If $u\in C^{1}(\Omega)$ then the same conclusion holds if $u(E)$  is at most countable. 
 \end{theorem}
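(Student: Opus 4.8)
The plan is to deduce the result directly from Theorem \ref{rado} by exploiting the elementary fact that a function is harmonic precisely when it is simultaneously subharmonic and superharmonic, equivalently when both $u$ and $-u$ are subharmonic.

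First I would observe that since $E$ is relatively closed in $\Omega$, the complement $\Omega\setminus E$ is open in $\mathbb R^{n}$, so it serves as its own open neighborhood and the hypothesis that $u$ is harmonic on $\Omega\setminus E$ fits the template of Theorem \ref{rado}. Applying the subharmonic part of Theorem \ref{rado} (the $\mathbb R^{n}$ case) to $u$ itself, which is $C^{1,p}$, subharmonic on $\Omega\setminus E$, and satisfies $\mathcal H^{p}(u(E))=0$, yields that $u$ is subharmonic on all of $\Omega$.

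Next I would apply the very same theorem to $-u$. The function $-u$ is again of class $C^{1,p}(\Omega)$ and is subharmonic on $\Omega\setminus E$ precisely because $u$ is superharmonic there. The only point requiring a glance is the smallness of the image: since $(-u)(E)=-u(E)$ and the Hausdorff measure is invariant under the reflection $t\mapsto -t$, we have $\mathcal H^{p}((-u)(E))=\mathcal H^{p}(u(E))=0$. Hence Theorem \ref{rado} gives that $-u$ is subharmonic on $\Omega$, that is, $u$ is superharmonic on $\Omega$. Combining the two conclusions, $u$ is harmonic on $\Omega$. The $C^{1}$ case is handled identically, the only modification being that the countability of $u(E)$ is likewise inherited by $(-u)(E)=-u(E)$.

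There is essentially no obstacle here, since the entire analytic content is already packaged in Theorem \ref{rado}; the proof amounts to verifying that its hypotheses are symmetric under the sign change $u\mapsto -u$, which they are. The only mild subtlety worth flagging is the reflection invariance of $\mathcal H^{p}$ (and of the countability condition), which guarantees that the image hypothesis survives passage from $u$ to $-u$.
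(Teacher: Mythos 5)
Your proposal is correct and is precisely the paper's argument: the theorem is stated in Section 6 as a direct corollary obtained ``by applying Theorem \ref{rado} to the subharmonic $u$ and superharmonic $-u$,'' and your verification that the hypotheses (Borel/relatively closed $E$, openness of $\Omega\setminus E$, and the invariance of $\mathcal H^{p}$-nullity and countability under $t\mapsto -t$) survive the sign change is exactly the routine check the paper leaves implicit. Nothing further is needed.
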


Likewise, by applying Theorem \ref{rado}  to the plurisubharmonic $u$ and plurisuperharmonic $-u$ we get the following extension result for pluriharmonic functions:

\begin{theorem}
	Let $\Omega$ be open in $\mathbb C^{n}$  and $E\subseteq \Omega$ be a relatively closed set.	\newline If $u\in C^{1,p}(\Omega),\, p\in(0,1]$ is  pluriharmonic  in $\Omega\setminus E$ and the Hausdorff measure $\mathcal H^{p}(u(E))=0$  then $u$ is actually pluriharmonic  in $\Omega$. If $u\in C^{1}(\Omega)$ then the same conclusion holds if $u(E)$  is at most countable. 

\end{theorem}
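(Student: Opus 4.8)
The plan is to reduce this pluriharmonic statement to the plurisubharmonic case of Theorem \ref{rado}, which has already been established. The starting observation is the elementary characterization that a continuous function $u$ is pluriharmonic on an open set if and only if both $u$ and $-u$ are plurisubharmonic there; equivalently, the complex Hessian $\frac{\partial^2 u}{\partial z_j \partial \bar z_k}$ vanishes identically, which is exactly the conjunction of the two inequalities $\frac{\partial^2 u}{\partial z_j \partial \bar z_k} \geq 0$ and $\frac{\partial^2 (-u)}{\partial z_j \partial \bar z_k} \geq 0$.

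First I would record that $-u$ inherits all the hypotheses of the theorem. Since $u \in C^{1,p}(\Omega)$ (respectively $u \in C^{1}(\Omega)$), so is $-u$; and since $u$ is pluriharmonic on $\Omega \setminus E$, so is $-u$, so in particular $-u$ is plurisubharmonic on an open neighborhood of $\Omega \setminus E$. Moreover the relatively closed set $E$ is in particular Borel, as required by Theorem \ref{rado}.

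Next I would verify that the smallness condition on the image is preserved under the sign change. The map $t \mapsto -t$ is an isometry of $\mathbb R$, hence preserves $p$-dimensional Hausdorff measure, so that $\mathcal H^{p}((-u)(E)) = \mathcal H^{p}(-u(E)) = \mathcal H^{p}(u(E)) = 0$. Likewise $(-u)(E) = -u(E)$ is at most countable precisely when $u(E)$ is. Therefore $-u$ satisfies the hypotheses of Theorem \ref{rado} in the $C^{1,p}$ case (respectively in the $C^{1}$ case).

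Finally, applying Theorem \ref{rado} to $u$ gives that $u$ is plurisubharmonic on all of $\Omega$, and applying it to $-u$ gives that $-u$ is plurisubharmonic on all of $\Omega$; by the characterization recalled above, $u$ is then pluriharmonic on $\Omega$, as claimed. The argument contains no genuine obstacle: the only points needing care are the isometry-invariance of Hausdorff measure (respectively the trivial invariance of countability) under $t \mapsto -t$, and the remark that a relatively closed set is Borel, both of which are immediate. I would note that the same reduction yields the harmonic analogue by invoking the real-variable version of Theorem \ref{rado} for $u$ and $-u$.
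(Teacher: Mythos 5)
Your proposal is correct and is exactly the paper's argument: the paper derives this theorem in one line by applying Theorem \ref{rado} to the plurisubharmonic function $u$ and to $-u$ (plurisuperharmonic $u$), and you have simply filled in the routine verifications (invariance of $\mathcal H^{p}$-nullity and countability under $t\mapsto -t$, Borelness of relatively closed sets, and the characterization of pluriharmonicity as plurisubharmonicity of both $\pm u$).
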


\bibliographystyle{amsplain}

\begin{thebibliography}{10}
	

	
\bibitem{B} V. I. Bogachev, 
\textit{Measure theory. Vol. I, II}, Springer-Verlag, Berlin, (2007).


\bibitem{Bru} A. Brudno, \textit{Continuity and differentiability} (Russian. English summary),
Rec. Math. [Mat. Sbornik] N.S. \textbf{13(55)} (1943), 119--134. 

\bibitem{CC} L. Caffarelli, X. Cabr\'e, \textit{Fully Nonlinear Elliptic Equations}, American Mathematical Society Colloquium Publications, \textbf{43} American Mathematical Society, Providence, RI, (1995). 



 
 \bibitem{D} E. Delanoy ({https://math.stackexchange.com/q/151931}). \textit{Set of zeroes of the derivative of a pathological function}, version: 2012-06-02.
 
\bibitem{DD} S. Dinew and \.Z Dinew, \textit{On a problem of Chirka}, preprint arXiv: 2102.01591 
	
 

\bibitem{EGZ} P. Eyssidieux, V. Guedj and A. Zeriahi, \textit{Viscosity solutions to degenerate complex Monge-Amp\`ere equations}, Comm. Pure Appl. Math. \textbf{64} (2011), no. 8, 1059--1094.

\bibitem{F} H. Federer, \textit{Geometric Measure Theory},
Die Grundlehren der mathematischen Wissenschaften, Band \textbf{153}, Springer-Verlag New York Inc., New York (1969). 


\bibitem{GS} S. Gardiner, T. Sj\"odin, \textit{
On a conjecture of Kr\'{a}l concerning the subharmonic extension of continuously differentiable functions}, 
Math. Bohem. \textbf{145} (2020), no. 1, 71--73. 



\bibitem{Ha} O. H\'ajek, \textit{Note sur la mesurabilit\'e B de la d\'eriv\'ee sup\'erieure} (French),
Fund. Math. \textbf{44} (1957), 238--240.



\bibitem{HP} R. Harvey, J. Polking, \textit{ Extending analytic objects},
Comm. Pure Appl. Math. \textbf{28} (1975), no. 6, 701--727. 
 
\bibitem{H} L. H\"ormander, \textit{Notions of Convexity}, Progress in Mathematics, \textbf{127}. Birkh\"auser Boston, Inc., Boston, MA, (1994).

 

\bibitem{I} C. Imbert, \textit{
Alexandroff-Bakelman-Pucci estimate and Harnack inequality for degenerate/singular fully non-linear elliptic equations},
J. Differ. Equ. \textbf{250} (2011), no. 3, 1553--1574. 


\bibitem{Kr}  J. Kr\'{a}l, \textit{Some extension results concerning harmonic functions},  J. Lond. Math. Soc,
II. Ser. \textbf{28} (1983), 62--70. 






\bibitem{MP} V. Mykhaylyuk, A. Plichko, \textit{On a problem of Mazur from "the Scottish Book'' concerning second partial derivatives}, 
Colloq. Math. \textbf{141} (2015), no. 2, 175--182. 

\bibitem{N} M. Neubauer, \textit{Uber die partiellen Derivierten unstetiger Funktionen}, (German),
Monatsh. Math. Phys. \textbf{38} (1931), no. 1, 139--146.
 




\bibitem{PP} A. V. Pokrovskiĭ, \textit{Removability of level sets for subharmonic functions}, (Russian)  Prikl. Mat. Mat. Fiz. \textbf{1} (2015), no. 1, 17--26.  

\bibitem{P} A. V. Pokrovskiĭ, \textit{
Conditions of subharmonicity and subharmonic extensions of functions}, (Russian. Russian summary)
Mat. Sb. \textbf{208} (2017), no. 8, 145--167; translation in
Sb. Math. \textbf{208} (2017), no. 7-8, 1225--1245. 

\bibitem{Po} J. Polking, \textit{
A survey of removable singularities. Seminar on nonlinear partial differential equations} (Berkeley, Calif., 1983), 261--292,
Math. Sci. Res. Inst. Publ., \textbf{2}, Springer, New York, 1984.

\bibitem{Ra} T. Rad\'{o}, \textit{
\"{U}ber eine nicht fortsetzbare Riemannsche Mannigfaltigkeit}, 
Math. Z. \textbf{20} (1924), no. 1, 1--6. 

\bibitem{R} C. A. Rogers, \textit{Hausdorff measures}, Cambridge University Press, London-New York (1970).

\bibitem{SY} A. Sadullaev, Zh. R. Yarmetov, \textit{Removable singularities of subharmonic functions in the class $Lip_{\alpha}$}, (Russian. Russian summary)
Mat. Sb. \textbf{186} (1995), no. 1, 131--148; translation in
Sb. Math. \textbf{186} (1995), no. 1, 133--150.

\bibitem{S} J. Serrin, \textit{On the differentiability of functions of several variables},
Arch. Rational Mech. Anal. \textbf{7} (1961), 359--372.
 


\bibitem{St}E. L.  Stout, \textit{A generalization of a theorem of Rad\'{o}}, 
Math. Ann. \textbf{177} (1968), 339--340. 

\bibitem{T} K. Takimoto, 
\textit{Rad\'o type removability result for fully nonlinear equations}, 
Differential Integral Equations 20 (2007), no. 8, 939--960.


\bibitem{Y} Zh. R. Yarmetov, \textit{Removable singularities of subharmonic functions in the class $C^1$}, (Russian. English, Uzbek summary) Uzbek. Mat. Zh. No. 3 (1994), 111--121. 
 
 \bibitem{Za} Z. Zahorski, \textit{Punktmengen, in welchen eine stetige Funktion nicht differenzierbar ist}, (Russian. German summary) Rec. Math. [Mat. Sbornik] N. S. \textbf{9(51)} (1941), 487--510. 
 
 
\bibitem{Ze} A. Zeriahi, \textit{A viscosity approach to degenerate
complex Monge-Amp\`ere equation}, Ann. Fac. Sci. Toulouse Math. (6) 22 (2013), no. 4, 843--913.
\end{thebibliography}

\end{document}